\newtheorem{Teo}{Theorem}[section]
\newtheorem{Def}[Teo]{Definition}
\newtheorem{Prop}[Teo]{Proposition}
\newtheorem{Obs}[Teo]{Remark}
\newtheorem{Lema}[Teo]{Lemma}
\newtheorem{Cor}[Teo]{Corollary}
\newtheorem{Que}[Teo]{Question}
\newtheorem{Exa}[Teo]{Example}
\newcommand{\R}{\mathbb{R}}
\newcommand{\Z}{\mathbb{Z}}
\newcommand{\N}{\mathbb{N}}
\newcommand{\Llr}{\Longleftrightarrow}
\newcommand{\lra}{\longrightarrow}
\newcommand{\Lra}{\Longrightarrow}
\newcommand{\VR}{\mathcal{O}}
\title[K\"ahler differentials]{K\"ahler differentials, pure extensions and minimal key polynomials}
\author{Josnei Novacoski}
\address{Departamento de Matem\'{a}tica,  Universidade Federal de S\~ao Carlos, Rod. Washington Luís, 235, 13565--905, S\~ao Carlos -SP, Brazil and Instytut Matematyki, Uniwersytet \'Sl\c aski w Katowicach, ul. Bankowa 14, 40-007, Katowice, Poland}
\email{josnei@ufscar.br}
\author{Mark Spivakovsky}
\address{CNRS UMR 5219 and Institut de Mathématiques de Toulouse, 118, rte de Narbonne, 31062 Toulouse cedex 9, France and Instituto de Matem\'aticas (Cuernavaca) LaSol, UMI CNRS 2001, UNAM, Av. Universidad s/n. Col. Lomas de Chamilpa, 62210, Cuernavaca, Morelos, M\'exico}
\email{mark.spivakovsky@math.univ-toulouse.fr}
\thanks{During the realization of this project the first author was supported by a grant from Funda\c{c}\~ao de Amparo \`a Pesquisa do Estado de S\~ao Paulo (process number 2017/17835-9).}
\keywords{Key polynomials, K\"ahler differentials, the defect}
\subjclass[2010]{Primary 13A18}
\begin{document}

\begin{abstract}
The main object of study in this paper is the module $\Omega$ of K\"ahler differentials of an extension of valuation rings. We show that in the case of pure extensions $\Omega$ has a very good description. Namely, it is isomorphic to the quotient of two modules defined by final segments in the value group of the valuation. This allows us to describe the annihilator of $\Omega$ and to give criteria for
$\Omega$ to be finitely generated and to be finitely presented. We also discuss how to relate the module of K\"ahler differentials of pure extensions to minimal key polynomials.  
\end{abstract}
\maketitle
\section{Introduction}
Our motivation for studying the module of K\"ahler differentials of extensions of valuation rings comes from \cite{CK} and \cite{CKR}. The main goal of that work is to present a simple characterization of \textit{deeply ramified fields}. Their first step is to understand the module of K\"ahler differentials of an extension of valuation rings whose corresponding quotient field extension is Galois and of prime degree. Using this, the authors give a criterion for when the module of K\"ahler differentials of a valuation ring extension is zero, assuming that the corresponding extension of valued fields is Galois and finite (not necessarily of prime degree). Using this criterion, one can understand better when a given valued field is deeply ramified.

This work is the first step of the program whose goal is to understand the module $\Omega$ of K\"ahler differentials of an extension of valuation rings in the most general possible case. For instance, if $\Omega\neq (0)$, then it is interesting to have an explicit characterization for its structure. In order to understand the module of K\"ahler differentials of a ring extension $B|A$, the first step is to present a set of generators for it. If $B=\VR_L$ and $A=\VR_K$ are the valuation rings of an unramified simple valued field extension $(L|K,v)$, then \textit{complete sets} (defined in Section \ref{secpreliminar} below) naturally give rise to sets of generators of $\VR_L$ over $\VR_K$. However, it is important to have a set of generators whose relations are as simple as possible. For this purpose, complete sets of \textit{key polynomials} are very useful.  

Let $(L|K,v)$ be a simple algebraic extension of valued fields and $\Gamma=vL$ --- the value  group of $(L,v)$. For a subset $S$ of $\Gamma$, we denote by $I_S$ the $\VR_L$-submodule of $L$ defined by $S$, that is, 
\[
I_S=\{b\in L\mid \mbox{ there exists }s\in S\mbox{ such that }vb \geq s\}.
\]
For any \textit{sequence of key polynomials} $\textbf{Q}$ for $(L|K,v)$ (see Section \ref{secpreliminar} below), we define two final segments $\alpha$ and $\beta$ of $\Gamma$. The main idea behind this work is that, under certain conditions, the $\VR_L$-module
$\Omega$ of K\"ahler differentials of the extension $\VR_L|\VR_K$ is isomorphic to $I_\alpha/I_\beta$. This allows us to present an explicit formula for the annihilator of $\Omega$ (Proposition \ref{propsannihilator}). We also present conditions for $\Omega$ to be finitely generated and to be finitely presented (Proposition \ref{finitelygenerated} and Corollary \ref{finitelypresentes}).

A natural situation when this idea can be applied is when $(L|K,v)$ is a \textit{pure extension} (see Definition \ref{definintiaopure}). Such an extension admits a complete sequence of key polynomials of degree one. Hence, we can present simple formulas for $\alpha$ and $\beta$ (\eqref{formulaalpha} and \eqref{formulabeta}, respectively) and therefore also for $\Omega$. This allows us to present a simple characterization for the annihilator of $\Omega$ (see \eqref{formulaannilhi}). Also, it is easy to show that, in this case, $\Omega$ is not finitely generated and, in particular, not finitely presented.  

By a {\bf plateau} of key polynomials we mean the set of all the key polynomials of a constant degree. In Section \ref{pureexteexts}, we extend the theory of pure extensions (as presented in \cite{CKR}). In \cite{CKR}, only immediate extensions are considered. These can be \textit{pure defect} or \textit{branched pure} extensions. A pure defect extension can be understood as an extension consisting of only one plateau of key polynomials and this plateau encodes the defect of the extension. Based on this, we consider the cases of \textit{purely inertial} and \textit{purely ramified} extensions. Analogously to the pure defect case, these extensions have only one plateau of key polynomials and this plateau encodes the inertia and ramification indices, respectively.  

Finally, we apply the theory of \textit{minimal key polynomials} to pure extensions (see Section \ref{minkeypolyno}). Minimal limit key polynomials are the main objects of study in \cite{NN2023}. For each plateau of key polynomials $\Psi$ we define a finite subset $B$ of $\N$ that characterizes the minimal key polynomials for
$\Psi$. If the extension is pure, then it admits only the plateau consisting of linear key polynomials. We show that for any pure defect, branched pure or purely inertial extension $(L|K,v)$, with $p={\rm char}(Kv)>0$, we have
\begin{equation}\label{omegaminim}
\Omega=(0)\Llr p\nmid B.
\end{equation}
Let us give a more precise description of this result.

If $(L|K,v)$ is of pure defect, then, by the results of \cite{NN2023}, $p\nmid B$ if and only if $1\in B$. Then the above result follows from the fact (Proposition \ref{Propbunitinhamans}) that, under these assumptions, $1\in B$ if and only if
$\Omega=(0)$.

If $(L|K,v)$ is branched pure, then each of the conditions in \eqref{omegaminim} is satisfied if and only if the extension is defectless. This is a consequence of Proposition \ref{Propbunitinhamans} and of an explicit description of $B$ presented in \cite{NN2023}.

Finally, in the purely inertial case, we show that each of the conditions in \eqref{omegaminim} is satisfied if and only if the residue field extension is separable.

In the purely ramified case, we cannot obtain a set of generators for the $\VR_K$-algebra $\VR_L$ from sequences of key polynomials (unless we allow monomials in the key polynomials with integer, possibly negative exponents). However, we adapt previous results to provide a similar description of $\Omega$ (see Section \ref{seclmpureramifi}).

To summarize, what we consider to be the main new results of the present paper compared to the earlier works on the subject are the following.
\begin{description}
\item[(i)] We explicitly compute the $\VR_L$-module $\Omega$ for all pure extensions of arbitrary degree as opposed to the defect
Artin--Schreier and Kummer extensions of degree $p$ as in \cite{CKR}. Our notion of pure extension is more general than that originally defined in \cite{CKR} as it also includes purely ramified and purely inertial extensions. The original pure extensions are called pure immediate extensions in our terminology.
\item[(ii)] Our description of $\Omega$ allows us to explicitly describe the annihilator ${\rm ann}(\Omega)$ for all pure extensions (not necessarily Galois) and, in particular, to characterize the situation when
\begin{equation}
\Omega=(0).\label{eq:Omegais(0)}
\end{equation}
In \cite{CK}, the equality \eqref{eq:Omegais(0)} is characterized for arbitrary Galois extensions (not necessarily pure).

\item[(iii)] We use the notion of minimal key polynomials as in \cite{NN2023} to characterize, in many cases, when the equality \eqref{eq:Omegais(0)} is satisfied.
\end{description}
\medskip
\noindent\textbf{Acknowledgement.} This paper was inspired by discussions with Franz-Viktor Kuhlmann and Hagen Knaf.
\section{Preliminaries}\label{secpreliminar}
\medskip

\noindent{\bf Notation.} In this paper we will use the letter $v$ to denote a valuation of $K$ or its extensions to algebraic extensions of $K$. For valuations of $K[x]$, we will use the letters $\nu$ and $\mu$.

For a valued field $(K,v)$ we will denote by $\VR_K$ the valuation ring, by $vK$ the value group and by $Kv$ the residue field of $v$. Also, for $b\in K$ we denote by $vb$ or $v(b)$ the value of $b$ in $vK$. If $b\in \VR_K$, then we denote by $bv$ the residue of $b$ in $Kv$.

For a finite valued field extension $(L|K,v)$ we will denote by $e(L|K,v)$ and $f(L|K,v)$ the ramification and inertial indices of
$(L|K,v)$, respectively:
\[
e(L|K,v)=(vL:vK)\mbox{ and }f(L|K,v)=[Lv:Kv].
\]
We will denote by $d(L|K,v)$ the defect of $(L|K,v)$. This can be defined as follows. Let $L^h$ and $K^h$ be the \textit{henselizations} of $L$ and $K$, respectively, determined by a fixed extension of $v$ to $\overline{K}$. Then
\[
d(L|K,v):=\frac{[L^h:K^h]}{(vL:vK)\cdot [Lv:Kv]}.
\]

Throughout this paper, when talking about a valued field extension $(L|K,v)$, we will use the symbol $\Omega$ to denote the
$\VR_L$-module of K\"ahler differentials of the extension $\VR_L|\VR_K$. 

\medskip
\subsection{Key polynomials}
The problem of understanding all the possible extensions of a valuation of $K$ to $K[x]$ has been extensively studied since the beginning of the past century. Many objects have been introduced to ``parametrize" these valuations. Here, we will focus on one of them: key polynomials.

Even key polynomials appear in the literature in different ways. For instance, there are \textit{Mac Lane--Vaquié key polynomials} and \textit{abstract key polynomials}. The relation between these two objects is well-known and we will briefly discuss it here.

Roughly speaking, for a valuation $\nu$ on $K[x]$, Mac Lane--Vaquié key polynomials tell the ``future" of $\nu$ and abstract key polynomials tell its ``past". In mathematical terms, a Mac Lane--Vaquié key polynomial for a valuation $\nu$ on $K[x]$ allows us to construct another valuation $\mu$ such that
\[
\nu(f)\leq \mu(f)\mbox{ for every }f\in K[x].
\]
On the other hand, an abstract key polynomial for $\nu$ allows us to construct another valuation $\mu$ such that
\[
\nu(f)\geq \mu(f)\mbox{ for every }f\in K[x].
\]

This means that for treating a given problem, one of these objects may be more suitable than the other. For the goal of this paper, we believe that abstract key polynomials are more suitable. Hence, we will just refer to them as \textit{key polynomials}.

For a valuation $\nu$ on $K[x]$ we fix an extension of it to $\overline{K}[x]$ and denote it again by $\nu$. For $f\in K[x]\setminus K$ we define
\[
\delta(f)=\max\{\nu(x-a)\mid f(a)=0\}.
\]
A monic polynomial $Q$ is a \textbf{key polynomial} for $\nu$ if for every $f\in K[x]$ we have
\[
\delta(f)\ge\delta(Q)\Lra\deg(f)\ge\deg(Q).
\]
For any polynomials $f\in K[x]$ and $q\in K[x]\setminus K$, there exist uniquely determined $f_0,\ldots, f_r\in K[x]$ with
$\deg(f_\ell)<\deg(q)$ for every $\ell$, $0\leq \ell\leq r$, such that
\begin{equation}
f=f_0+f_1q+\ldots+f_rq^r.\label{eq:qexpansion}
\end{equation}
This expression is called the \textbf{$q$-expansion of $f$}. For every monic polynomial
\[
q\in K[x]\setminus K
\]
we define the \textbf{truncation of $\nu$ at $q$} by
\[
\nu_q(f)=\min_{0\leq\ell\leq r}\left\{\nu\left(f_\ell q^\ell\right)\right\}
\]
where the notation is as in \eqref{eq:qexpansion}. We allow the possibility
\begin{equation}
\nu(q)=\infty.\label{eq:nuqinfty}
\end{equation}
If $q$ the has smallest degree among the polynomials satisfying \eqref{eq:nuqinfty}, then $\nu_q(f)=\infty$ if and only if $f_0=0$.

If $Q$ is a key polynomial for $\nu$, then by \cite[Proposition 2.6]{NS2018} $\nu_Q$ is a valuation with
\[
\nu(f)\geq \nu_Q(f)\mbox{ for every }f\in K[x].
\]

A subset $\textbf{Q}$ of $K[x]$ is called a {\bf complete set} for a valuation $\nu$ on $K[x]$, if for every $f\in K[x]$ there exists $q\in \textbf Q$ such that
\[
\deg(q)\leq \deg(f)\mbox{ and }\nu_q(f)=\nu(f).
\]

We consider a valued field extension $(L|K,v)$. We assume that this extension is simple and algebraic and write it as $L=K(\eta)$ for some $\eta\in L$. Consider the valuation $\nu$ on $K[x]$ with non-trivial support defined by $v$ and $\eta$, that is,
\[
\nu(f):=v(f(\eta))\mbox{ for every }f\in K[x].
\]
The philosophy is that a complete set for $\nu$ should induce a set of generators for the extension $\VR_L|\VR_K$ of valuation rings.  This is true whenever $(L|K,v)$ is unramified, that is, $vL=vK$ (see Proposition \ref{Corfinitelgenalg} below).

In order to study the module of K\"ahler differentials, it is necessary to know the generators of the extension, but also the relations between these generators. Because of this, it is important to have a complete set with relations as simple as possible. For this purpose, complete sets formed by key polynomials are very handy. 
\begin{Obs}
We will use the term ``complete sequence" instead of ``complete set" when $\textbf{Q}=\{Q_i\}_{i\in I}$ with $I$ well-ordered and the map $i\mapsto Q_i$ is order preserving with respect to $\delta$ ($i<j\Lra \delta(Q_i)<\delta(Q_j)$).
\end{Obs}

\begin{Teo}\cite[Theorem 1.1]{NS2018}\label{them2.2}
Every valuation $\nu$ on $K[x]$ admits a complete sequence of key polynomials. 
\end{Teo}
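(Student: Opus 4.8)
The plan is to establish the theorem in two stages: first show that the set $\mathbf{Q}_0$ of \emph{all} key polynomials of $\nu$ is a complete set, and then extract from it a subfamily that is well-ordered and order-preserving with respect to $\delta$.

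For the first stage I would prove the following \emph{capturing lemma}: for every $f\in K[x]\setminus K$ there is a key polynomial $Q$ with $\deg(Q)\le\deg(f)$ and $\nu_Q(f)=\nu(f)$. The construction of $Q$ is direct. Let $Q$ be a monic polynomial of \emph{minimal degree} subject to $\delta(Q)\ge\delta(f)$; such a $Q$ exists because the set of admissible degrees is a nonempty subset of $\N$ (it contains $\deg(f)$, the degree of the monic associate of $f$), and consequently $\deg(Q)\le\deg(f)$. This $Q$ is automatically a key polynomial: if $h$ satisfies $\delta(h)\ge\delta(Q)$ then $\delta(h)\ge\delta(f)$, so by minimality of $\deg(Q)$ we get $\deg(h)\ge\deg(Q)$, which is exactly the defining condition of a key polynomial. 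It then remains to show $\nu_Q(f)=\nu(f)$. Since $Q$ is a key polynomial, $\nu_Q\le\nu$ by \cite[Proposition 2.6]{NS2018}, so only the inequality $\nu_Q(f)\ge\nu(f)$ needs proof, and this is the analytic heart of the argument: one must show that a key polynomial whose $\delta$-value dominates that of $f$ ``sees'' the whole value of $f$ through its truncation. I would prove it by analyzing the $Q$-expansion \eqref{eq:qexpansion} of $f$ and using that $\delta(Q)\ge\delta(f)$ forces the $\nu$-value of $f$ to be concentrated in a single term $f_\ell Q^\ell$ of that expansion.

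For the second stage I would organize $\mathbf{Q}_0$ by degree, using three formal consequences of the definition of a key polynomial. First, every monic linear polynomial is a key polynomial, since there is no nonconstant polynomial of smaller degree; thus key polynomials of degree one exist. Second, two key polynomials with the same $\delta$-value have the same degree: if $\delta(Q_1)=\delta(Q_2)$ with $\deg(Q_1)<\deg(Q_2)$, then $\delta(Q_1)\ge\delta(Q_2)$ contradicts the defining property of $Q_2$. Third, and by the same token, $\deg(Q_1)<\deg(Q_2)\Lra\delta(Q_1)<\delta(Q_2)$. Hence, writing $\Lambda_d$ for the set of $\delta$-values of key polynomials of degree $d$, the $\Lambda_d$ are mutually disjoint and increase with $d$ (every element of $\Lambda_d$ is smaller than every element of $\Lambda_{d'}$ whenever $d<d'$). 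For each degree $d$ occurring among key polynomials I would choose a well-ordered cofinal subfamily of $\Lambda_d$ (cofinal well-ordered subsets of a totally ordered set always exist) and keep one key polynomial for each selected value. The union of these subfamilies, concatenated in order of increasing degree, is then well-ordered by $\delta$ and order-preserving, and the capturing lemma guarantees it is still complete: given $f$, the minimal-degree key polynomial produced above has some degree $d_0\le\deg(f)$ with $\delta\ge\delta(f)$, so cofinality in $\Lambda_{d_0}$ yields a selected key polynomial of degree $\le\deg(f)$ capturing the value of $f$.

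The main obstacle is the inequality $\nu_Q(f)\ge\nu(f)$ in the capturing lemma, that is, the precise link between the $\delta$-order and truncation. Everything else is either set-theoretic bookkeeping or a formal consequence of the definition of a key polynomial, but this step is where the geometry of the valuation --- how close the roots of $f$ and of $Q$ lie to $x$ in the $\nu$-sense --- genuinely enters, and it must be proved with care, including the degenerate case $\nu(Q)=\infty$ permitted in \eqref{eq:nuqinfty}, where $Q$ generates the support and the argument must be phrased so that infinite values are handled uniformly. I expect this to be the only substantial difficulty; notably, the construction avoids having to exhibit limit key polynomials explicitly, since the minimal-degree choice in the capturing lemma never requires passing to a supremum of $\delta$-values.
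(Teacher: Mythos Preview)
Your proposal is correct and follows the same two-stage strategy sketched in the paper's remark after Theorem~\ref{them2.2}: first show that the set of all key polynomials is complete, then extract a well-ordered subfamily by stratifying by degree and choosing cofinal subsets of each $\Psi_m$. You have correctly isolated the only substantive step --- proving $\nu_Q(f)=\nu(f)$ whenever $Q$ is a key polynomial with $\delta(Q)\ge\delta(f)$ --- which the paper does not prove but defers to \cite{NS2018}.
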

\begin{Obs}\begin{enumerate}
\item It is not hard to prove from definitions that the set of all key polynomials is complete. A sequence as in Theorem \ref{them2.2} can be constructed as follows. For every $m\in \N$ consider the set $\Psi_m$ of all the key polynomials of degree $m$. For each $m\in\N$ such that $\Psi_m\neq \emptyset$ we define $\textbf{Q}_m$ as follows. If $\nu(\Psi_m)$ has a last element
$\nu(Q)$ with respect to $\delta$, then put $\textbf{Q}_m=\{Q\}$. If $\nu(\Psi_m)$ does not have a last element, take any sequence $\textbf Q_m$ whose images form an a cofinal subset of $\nu(\Psi_m)$. Then
\[
\textbf{Q}:=\bigcup_{\substack{m\in\N\\\Psi_m\neq\emptyset}}\textbf{Q}_m
\]
is the desired sequence of key polynomials.
\item Assume that
\[
\mbox{\rm supp}( \nu):=\{f\in K[x]\mid \nu(f)=\infty\}
\]
is a principal ideal $(g)$ of $K[x]$ (this is the only situation considered in this paper). Then the complete sequence $\bf Q$ of key polynomials can be chosen so that $g\in\bf Q$ and $\nu(Q)<\infty$ for all $Q\in{\bf Q}\setminus\{g\}$. In particular, $g$ is the last element of $\bf Q$. From now on, we will assume that $\bf Q$ has this property without mentioning it explicitly. 
\end{enumerate}
\end{Obs}
\begin{Obs}
Every linear monic polynomial is a key polynomial (in particular, $\Psi_1\neq \emptyset$). If $\nu(\Psi_1)$ does not have a last element, then the cofinal families $\textbf Q_1$ in $\Psi_1$ are in bijection with the \textit{pseudo-convergent sequences} on $K$ induced by $\nu$. This bijection can be presented explicitly:
\[
\textbf Q_1=\{x-a_i\}_{i\in I}\longmapsto \{a_i\}_{i\in I}.
\]
\end{Obs}

\section{The generation of an extension of valuation rings}
  
Let $(K,v)$ be a valued field and take a valuation $\mu$ on $K[x]$, extending $v$. For simplicity of notation, in this section we denote
\[
\Gamma_v=vK\mbox{ and }\Gamma_\mu= \mu(K[x]).
\]
Assume that $e(\mu/v):=(\Gamma_\mu:\Gamma_v)=1$.

\begin{Def}
Two sets $\textbf{Q}, \textbf{Q}'\subset K[x]$ are called $K$-proportional if there exists a bijection $\varphi:\textbf Q \lra \textbf{Q}'$ such that for every $q\in \textbf{Q}$ there exists $a\in K\setminus\{0\}$ such that
$\varphi(q)=aq$.
\end{Def}
\begin{Lema}\label{lemaparasetpol}
If $\textbf{Q}\subseteq K[x]$ is a complete set for $\mu$, then any $K$-proportional set is also a complete set for $\mu$.
\end{Lema}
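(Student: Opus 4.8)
The plan is to reduce the whole statement to a single observation: the truncation $\mu_q$ is unchanged when $q$ is replaced by a nonzero $K$-multiple $aq$. So first I would fix $q\in\textbf{Q}$ together with the scalar $a\in K\setminus\{0\}$ supplied by the definition of $K$-proportionality, and set $q'=\varphi(q)=aq$. Since $a\neq 0$ we have $\deg(q')=\deg(q)$, so the degree requirement in the definition of a complete set is manifestly insensitive to the substitution $q\mapsto q'$. The content of the lemma is therefore entirely concentrated in the behaviour of the truncation.

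The key step is to compare the $q$-expansion and the $q'$-expansion of an arbitrary $f\in K[x]$. Writing the $q$-expansion $f=\sum_{\ell=0}^r f_\ell q^\ell$ with $\deg(f_\ell)<\deg(q)$, I would substitute $q=a^{-1}q'$ to get $f=\sum_{\ell=0}^r\left(a^{-\ell}f_\ell\right)(q')^\ell$. Because $\deg\!\left(a^{-\ell}f_\ell\right)=\deg(f_\ell)<\deg(q)=\deg(q')$, the uniqueness of the $q'$-expansion forces this to be the $q'$-expansion of $f$, so its coefficients are $f'_\ell=a^{-\ell}f_\ell$. The crucial cancellation is then that the individual terms coincide: $f'_\ell(q')^\ell=a^{-\ell}f_\ell\cdot a^\ell q^\ell=f_\ell q^\ell$ for every $\ell$. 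Taking $\mu$-values and minimizing over $\ell$ gives $\mu_{q'}(f)=\min_\ell\mu\!\left(f'_\ell(q')^\ell\right)=\min_\ell\mu\!\left(f_\ell q^\ell\right)=\mu_q(f)$, valid for all $f\in K[x]$.

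With this identity established the conclusion is immediate. Given any $f\in K[x]$, completeness of $\textbf{Q}$ furnishes some $q\in\textbf{Q}$ with $\deg(q)\le\deg(f)$ and $\mu_q(f)=\mu(f)$. Then $q'=\varphi(q)\in\textbf{Q}'$ satisfies $\deg(q')=\deg(q)\le\deg(f)$ and $\mu_{q'}(f)=\mu_q(f)=\mu(f)$, so $q'$ witnesses completeness for the same $f$. As $f$ was arbitrary and $\varphi$ is a bijection, $\textbf{Q}'$ is a complete set for $\mu$.

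I do not expect a genuine obstacle here, since the argument is a direct verification. The only point requiring care is that elements of $\textbf{Q}'$ need not be monic, so $\mu_{q'}$ must be read through its defining $q'$-expansion rather than presupposing that $q'$ is a key polynomial; the term-by-term equality $f'_\ell(q')^\ell=f_\ell q^\ell$ is exactly what shows this non-monic truncation agrees with $\mu_q$, which is all that the completeness condition ever uses.
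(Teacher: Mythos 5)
Your proposal is correct and follows essentially the same route as the paper: both arguments rewrite the $q$-expansion $f=\sum_\ell f_\ell q^\ell$ as the $(aq)$-expansion $f=\sum_\ell (a^{-\ell}f_\ell)(aq)^\ell$, observe that the individual terms (hence the truncation $\mu_q(f)=\mu_{aq}(f)$) and the degree are unchanged, and conclude that completeness transfers along the bijection $\varphi$. Your extra remark about uniqueness of the expansion and about $aq$ not being monic just makes explicit what the paper leaves implicit.
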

\begin{proof}
For $f,q\in K[x]$ and $a\in K\setminus\{0\}$, if
\[
f=f_0+f_1q+\ldots+f_nq^n
\]
is the $q$-expansion of $f$, then
\[
f=f_0+\frac{f_1}{a}\cdot(aq)+\ldots+\frac{f_n}{a^n}\cdot(aq)^n
\]
is the $aq$-expansion of $f$. If $\mu_q(f)=\mu(f)$, then $\mu_{aq}(f)=\mu(f)$ for every $a\in K$. Moreover, $\deg(q)=\deg(aq)$. The result follows immediately.
\end{proof}

\begin{Lema}\label{propostiset}
If $e(\mu/v)=1$, then for every complete set $\textbf{Q}$ of $\mu$ there exists a complete set $\tilde{\textbf{Q}}$ for $\mu$,
$K$-proportional to $\textbf{Q}$, such that $\mu(q)=0$ for every $q\in\tilde{\textbf{Q}}\setminus\{g\}$.
\end{Lema}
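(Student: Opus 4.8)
The plan is to normalize each polynomial in $\textbf{Q}$ by multiplying it by a suitable nonzero scalar from $K$ so that its $\mu$-value drops to $0$, and then to invoke Lemma \ref{lemaparasetpol} to guarantee that the resulting $K$-proportional set is still complete. In this way completeness is for free, and the only thing requiring the hypothesis $e(\mu/v)=1$ is the existence of the normalizing scalars.

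First I would recall the convention fixed in the Remarks of Section \ref{secpreliminar}: the complete set $\textbf{Q}$ may be taken to contain $g$ (the generator of the support) as its last element, with $\mu(Q)<\infty$ for every $Q\in\textbf{Q}\setminus\{g\}$. Then, for each such $q\neq g$, the value $\mu(q)$ lies in $\Gamma_\mu$ and is finite. Here the hypothesis enters: since $e(\mu/v)=(\Gamma_\mu:\Gamma_v)=1$ and $\Gamma_v\subseteq\Gamma_\mu$, we have $\Gamma_\mu=\Gamma_v=vK$, so $\mu(q)\in vK$. Hence there exists $a_q\in K\setminus\{0\}$ with $v(a_q)=-\mu(q)$, and because $\mu$ extends $v$ we obtain $\mu(a_q q)=v(a_q)+\mu(q)=0$.

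Next I would define $\varphi\colon\textbf{Q}\to K[x]$ by $\varphi(g)=g$ and $\varphi(q)=a_q q$ for $q\neq g$, and set $\tilde{\textbf{Q}}=\varphi(\textbf{Q})$. By construction $\tilde{\textbf{Q}}$ is $K$-proportional to $\textbf{Q}$ (the map $\varphi$ is a bijection onto its image; injectivity is routine, since $a_q q=a_{q'}q'$ with $q,q'$ monic forces, by comparing leading terms, $a_q=a_{q'}$ and thus $q=q'$). Lemma \ref{lemaparasetpol} then yields that $\tilde{\textbf{Q}}$ is a complete set for $\mu$, and by the choice of the $a_q$ we have $\mu(q)=0$ for every $q\in\tilde{\textbf{Q}}\setminus\{g\}$, which is exactly the assertion.

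I do not expect a serious obstacle here; the whole argument is a normalization combined with the already-established Lemma \ref{lemaparasetpol}. The one genuinely essential point — and the place where the hypothesis cannot be dropped — is that $e(\mu/v)=1$ forces the finite value $\mu(q)$ to already live in $vK$, so that the correcting factor $a_q$ can be found inside the base field $K$ itself rather than merely in an extension realizing a larger value group. Without this, one could only rescale by elements of $L$ (or of $\overline K$), which would take us outside $K[x]$ and break $K$-proportionality.
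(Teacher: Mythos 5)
Your proof is correct and follows essentially the same route as the paper: use $e(\mu/v)=1$ to find, for each $q\neq g$, a scalar $a_q\in K$ with $v(a_q)=-\mu(q)$, so that $\mu(a_q q)=0$, and then invoke Lemma \ref{lemaparasetpol} to preserve completeness. The extra remarks on injectivity of $\varphi$ and on why the hypothesis is needed are fine but not required.
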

\begin{proof}
Since $e(\mu/v)=1$, for every $q\in K[x]\setminus(g)$ there exists $a\in K$ such that 
\[
v(a^{-1})=\mu(q).
\]
Hence $\mu(aq)=0$ and we can apply the previous lemma. 
\end{proof}

For a subset $\textbf{Q}$ of $K[x]$, we will denote by $\N_0^\textbf{Q}$ the set of mappings $\lambda:\textbf{Q}\lra\N_0$ such that $\lambda(q)=0$ for all but finitely many $q\in \textbf Q$ (here $\N_0$ denotes the set of the non-negative integers). For each $\lambda\in\N_0^\textbf{Q}$ we denote
\[
\textbf{Q}^\lambda=\prod_{q\in \textbf Q}q^{\lambda(q)}.
\] 

\begin{Teo}\label{Theoremkeypol}
Take a complete set $\textbf{Q}\subseteq K[x]$ for $\mu$ such that $\mu(q)=0$ for every $q\in \textbf{Q}\setminus\{g\}$. For $f\in K[x]\setminus(g)$, if $\mu(f)\geq 0$, then there exist $\lambda_1,\ldots,\lambda_s\in\N^\textbf{Q}$ and $a_1,\ldots,a_s\in \VR_K$ such that
\[
f=\sum_{\ell=1}^s a_\ell\textbf{Q}^{\lambda_\ell}
\]
and
\begin{equation}
\mu(f)=\min\limits_{1\leq \ell \leq s}v(a_\ell).\label{eq:minimalvalue}
\end{equation}
\end{Teo}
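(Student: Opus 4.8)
The plan is to argue by induction on $\deg f$, at each stage peeling off one key polynomial from the complete set $\textbf{Q}$ and using the normalisation $\mu(q)=0$ for $q\neq g$ to keep every coefficient inside $\VR_K$ while reading off its value. In the base case $\deg f=0$ one has $f=a\in K$ with $v(a)=\mu(f)\geq 0$, so $a\in\VR_K$; choosing $s=1$, $a_1=a$ and $\lambda_1$ the zero map (so $\textbf{Q}^{\lambda_1}=1$) proves the statement.

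Before the inductive step I would dispose of the exceptional polynomial $g$. A monomial $\textbf{Q}^\lambda$ involving $g$ lies in $(g)$ and therefore has infinite $\mu$-value, so such monomials play no role in the value formula; this lets me work modulo $(g)$ and, dividing $f$ by $g$, replace $f$ by its remainder, so that I may assume $\deg f<\deg g$. This preserves the hypotheses: the remainder is nonzero because $f\notin(g)$, and it has the same value as $f$ since the discarded term has value $\infty$. Once $\deg f<\deg g$, every key polynomial produced below will have degree $\leq\deg f<\deg g$, hence will differ from $g$.

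Now for the induction proper. By completeness choose $q\in\textbf{Q}$ with $\deg q\leq\deg f$ and $\mu_q(f)=\mu(f)$; since $\deg q<\deg g$ we have $q\neq g$, hence $\mu(q)=0$. Writing the $q$-expansion $f=h_0+h_1q+\cdots+h_rq^r$ with $\deg h_\ell<\deg q$, the equality $\mu(h_\ell q^\ell)=\mu(h_\ell)$ together with the very definition of the truncation gives $\mu(f)=\mu_q(f)=\min_\ell\mu(h_\ell)$. In particular $\mu(h_\ell)\geq\mu(f)\geq 0$ for all $\ell$, and each nonzero $h_\ell$ has degree $<\deg q\leq\deg f$. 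Applying the induction hypothesis to the nonzero $h_\ell$ yields $h_\ell=\sum_j a_{\ell,j}\textbf{Q}^{\lambda_{\ell,j}}$ with $a_{\ell,j}\in\VR_K$ and $\mu(h_\ell)=\min_j v(a_{\ell,j})$. Multiplying the $\ell$-th expression by $q^\ell$ turns each $\textbf{Q}^{\lambda_{\ell,j}}q^\ell$ into a single admissible monomial $\textbf{Q}^{\lambda'_{\ell,j}}$, and summing over $\ell$ and $j$ produces the desired $f=\sum a_{\ell,j}\textbf{Q}^{\lambda'_{\ell,j}}$. The value identity is then automatic, because $\min_{\ell,j}v(a_{\ell,j})=\min_\ell\min_j v(a_{\ell,j})=\min_\ell\mu(h_\ell)=\mu(f)$.

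The step I expect to be the crux is guaranteeing that the minimum in $\mu(f)=\min_\ell v(a_\ell)$ is genuinely attained — equivalently, that no cancellation occurs at the minimal value — while simultaneously never allowing a coefficient to fall outside $\VR_K$. This is precisely what the standing hypothesis $e(\mu/v)=1$ buys: by Lemma \ref{propostiset} it permits the normalisation $\mu(q)=0$ for every $q\neq g$, which in turn makes the truncation $\mu_q$ record the value of $f$ as the plain minimum of the values of the $q$-expansion coefficients, so that completeness delivers $\mu(f)=\min_\ell\mu(h_\ell)$ with nothing left to check. The initial reduction modulo $g$ is what keeps the one polynomial with $\mu=\infty$ out of the expansion, where it would otherwise wreck this bookkeeping.
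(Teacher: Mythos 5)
Your proof is correct and follows essentially the same route as the paper's: induction on $\deg f$, using completeness to pick $q$ with $\deg q\leq\deg f$ and $\mu_q(f)=\mu(f)$, expanding in powers of $q$, and feeding the coefficients back into the induction hypothesis, with the normalisation $\mu(q)=0$ keeping the coefficients in $\VR_K$ and making the value identity immediate. The one point to flag is your preliminary reduction modulo $(g)$: replacing $f$ by its remainder establishes the identity $f=\sum_\ell a_\ell\textbf{Q}^{\lambda_\ell}$ only modulo $(g)$ when $\deg f\geq\deg g$, whereas the theorem asserts an exact equality in $K[x]$ --- this is harmless for the paper's sole application (where $\deg f<\deg g$), and it does explicitly justify what the paper leaves implicit, namely that the $q$ selected by completeness is never $g$.
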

\begin{proof}
We proceed by induction on the degree of $f$. If $\deg(f)=1$, then by our assumption there exists $q\in\textbf{Q}$ of degree one (in this case $f=aq+b$ for $a,b\in K$), such that
\[
\mu(f)=\min\{\mu(aq),\mu(b)\}.
\]
Since $0\leq \min\{\mu(aq),\mu(b)\}$ and $\mu(q)=0$, we have $a,b\in\VR_K$ and we are done.

Now consider an integer $n>1$ and assume that for every $f\in K[x]\setminus(g)$, if $\deg(f)<n$, then there exist
$\lambda_1,\ldots \lambda_s\in\N^\textbf{Q}$ and $a_1,\ldots,a_s\in\VR_K$ such that
\[
f=\sum_{\ell=1}^s a_\ell\textbf{Q}^{\lambda_\ell}
\]
and (\ref{eq:minimalvalue}) holds.

Take $f\in K[x]\setminus(g)$ with $\deg(f)=n$ and $\mu(f)\geq 0$. By our assumption on $\textbf{Q}$, there exists
$q\in \textbf{Q}$ such that $\deg(q)\leq\deg(f)$ and $\mu(f)=\mu_q(f)$. This means that
\[
f=f_0+f_1q+\ldots+f_rq^r\mbox{ with }\deg(f_l)<\deg(q)\mbox{ for every }l, 0\leq l\leq r,
\]
and
\[
0\leq \mu(f)=\min\left\{\mu\left(f_lq^l\right)\right\}\leq \mu(f_l),\mbox{ for every }l, 1\leq l\leq r,
\]
because $\mu(q)=0$. Since $\deg(f_l)<\deg(q)\leq\deg(f)=n$, there exist
\[
\lambda_{0,1},\ldots,\lambda_{0,s_0},\ldots,\lambda_{r,1},\ldots,\lambda_{r,s_r}\in\N^{\textbf{Q}}
\]
and $a_{0,1},\ldots, a_{r,s_r}\in \VR_K$ such that
\[
f_l=\sum_{\ell=1}^{s_l}a_{l,\ell}\textbf{Q}^{\lambda_{l,\ell}}
\]
and $\mu(f_l)=\min\limits_{1\leq \ell \leq s_l}v(a_{l,\ell})$ for every $l$, $0\leq l\leq r$. This implies that
\[
f=\sum_{l=0}^r\left(\sum_{\ell=1}^{s_l}a_{l,\ell}\textbf{Q}^{\lambda_{l,\ell}}\right)q^l=
\sum_{l=0}^r\sum_{\ell=1}^{s_l}a_{l,\ell}\textbf{Q}^{\lambda'_{l,\ell}},
\]
where
\begin{displaymath}
\lambda'_{l,\ell}\left(q'\right)=\left\{
\begin{array}{ll}
\lambda_{l,\ell}(q)+l&\mbox{ if }q=q'\\
\lambda_{l,\ell}(q)&\mbox{ if }q\neq q'
\end{array}
\right.
\end{displaymath}
and $\mu(f)=\min\limits_{\substack{0\leq l\leq r\\1\leq \ell \leq s_l}}v(a_{l,\ell})$.
This concludes our proof.
\end{proof}

\subsection{The algebraic case}
Let $(L|K,v)$ be a simple algebraic extensions of valued fields. Write $L=K(\eta)$ and let $\nu$ be the valuation of $K[x]$ with non-trivial support defined by $v$ and $\eta$:
\[
\nu(f):=v(f(\eta)).
\]

For $\textbf{Q}\subseteq K[x]$ we denote by $\textbf{Q}(\eta):=\{q(\eta)\mid q\in \textbf{Q}\}$.
\begin{Prop}\label{Corfinitelgenalg}
Assume that $e(L|K,v)=1$. For any complete set of polynomials $\textbf{Q}$ for $\nu$ there exists a $K$-proportional complete set
$\tilde{\textbf{Q}}$ such that
\begin{equation}\label{formulagebfvalr}
\VR_L=\VR_K\left[\tilde{\textbf{Q}}(\eta)\right].
\end{equation}
\end{Prop}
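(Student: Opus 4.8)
The plan is to deduce the statement directly from Lemma~\ref{propostiset} and Theorem~\ref{Theoremkeypol}, with the minimal polynomial of $\eta$ over $K$ playing the role of the generator $g$ of $\mathrm{supp}(\nu)$. First I would record that $\nu$ falls under the hypotheses of the previous subsection with $\mu=\nu$: indeed $\mathrm{supp}(\nu)=\{f\in K[x]\mid f(\eta)=0\}=(g)$, and since every nonzero element of $L=K[\eta]$ is of the form $f(\eta)$ with $f\notin(g)$, one has $\Gamma_\nu=\nu(K[x]\setminus(g))=vL$. Hence $e(\nu/v)=(vL:vK)=e(L|K,v)=1$ by hypothesis, so Lemma~\ref{propostiset} applies and produces a complete set $\tilde{\textbf{Q}}$ for $\nu$, $K$-proportional to $\textbf{Q}$, with $\nu(q)=0$ for every $q\in\tilde{\textbf{Q}}\setminus\{g\}$. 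This $\tilde{\textbf{Q}}$ is the set we will use; it remains to prove \eqref{formulagebfvalr}.

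For the inclusion $\VR_K[\tilde{\textbf{Q}}(\eta)]\subseteq\VR_L$ I would simply check that the generators lie in $\VR_L$: we have $\VR_K\subseteq\VR_L$, while for $q\in\tilde{\textbf{Q}}\setminus\{g\}$ the value $v(q(\eta))=\nu(q)=0\geq 0$ gives $q(\eta)\in\VR_L$, and $g(\eta)=0\in\VR_L$. Since $\VR_L$ is a ring, the subring generated over $\VR_K$ by these elements is contained in $\VR_L$.

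The substantive inclusion is $\VR_L\subseteq\VR_K[\tilde{\textbf{Q}}(\eta)]$, and here I would invoke Theorem~\ref{Theoremkeypol}. Take $b\in\VR_L\setminus\{0\}$ and write $b=f(\eta)$ with $f\in K[x]\setminus(g)$ (the case $b=0$ being trivial); then $\nu(f)=v(b)\geq 0$. Applying the theorem to $\tilde{\textbf{Q}}$ yields $\lambda_1,\dots,\lambda_s\in\N^{\tilde{\textbf{Q}}}$ and $a_1,\dots,a_s\in\VR_K$ with $f=\sum_{\ell=1}^s a_\ell\,\tilde{\textbf{Q}}^{\lambda_\ell}$. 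Evaluating the ring homomorphism $K[x]\to L$, $x\mapsto\eta$, gives $b=\sum_{\ell=1}^s a_\ell\prod_{q\in\tilde{\textbf{Q}}}q(\eta)^{\lambda_\ell(q)}$, and the key observation is that every monomial with $\lambda_\ell(g)>0$ contributes a factor $g(\eta)^{\lambda_\ell(g)}=0$ and hence drops out; the surviving monomials are products of the elements $q(\eta)$ with $q\in\tilde{\textbf{Q}}\setminus\{g\}$, scaled by coefficients $a_\ell\in\VR_K$. Therefore $b\in\VR_K[\tilde{\textbf{Q}}(\eta)]$, which completes the proof.

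I expect no serious obstacle, since the result is essentially a translation of Theorem~\ref{Theoremkeypol} from $K[x]$ to $L=K[\eta]$. The only points requiring care are verifying that the hypothesis $e(L|K,v)=1$ is exactly what is needed to invoke Lemma~\ref{propostiset} (so that the generators can be normalized to have value $0$, placing them in $\VR_L$), and checking that passing to $L$ via evaluation at $\eta$ preserves the representation while merely discarding the monomials divisible by $g$.
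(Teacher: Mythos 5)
Your proof is correct and follows essentially the same route as the paper's: normalize the complete set via Lemma \ref{propostiset} (with Lemma \ref{lemaparasetpol} guaranteeing completeness is preserved), note the easy inclusion, and for $b=f(\eta)\in\VR_L$ apply Theorem \ref{Theoremkeypol} to $f$ and evaluate at $\eta$. The extra details you supply --- checking $e(\nu/v)=e(L|K,v)$, and observing that monomials involving $g$ vanish under evaluation --- are points the paper leaves implicit (the paper sidesteps the latter by taking $\deg(f)<\deg(g)$), but they do not change the argument.
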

\begin{proof}
By Lemma \ref{propostiset} there exists a set $\tilde{\textbf Q}$ which is $K$-proportional such that
\[
\nu(q)=0\mbox{ for every }q\in \tilde{\textbf{Q}}\setminus(g).
\]
By Lemma \ref{lemaparasetpol}, $\tilde{\textbf Q}$ is a also a complete set for $\nu$.

By definition, the right hand side of \eqref{formulagebfvalr} is contained in the left hand side. For every $b\in L$ there exists a polynomial $f(x)\in K[x]$ (with $\deg(f)<\deg(g)$) such that $b=f(\eta)$. If $b\in \VR_L$, then $0\leq\nu(f(x))$. By Theorem \ref{Theoremkeypol}, there exist  $a_1,\ldots,a_r\in \VR_K$ and $\lambda_1,\ldots,\lambda_r\in\N^{\tilde{\textbf{Q}}}$ such that
\[
b=f(\eta)=\sum_{\ell=1}^ra_\ell\tilde{\textbf{Q}}(\eta)^{\lambda_\ell}\in \VR_K\left[\tilde{\textbf{Q}}(\eta)\right].
\]
This concludes the proof.
\end{proof}

\section{A characterization of $\Omega$}
We consider a valued field extension $(L|K,v)$ as before. Namely, $L=K(\eta)$ for some algebraic $\eta\in L$, $g$ is the minimal polynomial of $\eta$ over $K$ and $\nu$ is the valuation of $K[x]$ with non-trivial support defined by $v$ and $\eta$. In this case, $g$ is a key polynomial for $\nu$ with
\[
\delta(g)=\infty>\delta(Q)\mbox{ for every key polynomial } Q\mbox{ for }\nu\mbox{ such that }Q\neq g. 
\]

Take a complete sequence of key polynomials $\textbf{Q}\cup\{g\}$ for $\nu$, where $\textbf{Q}=\{Q_i\}_{i\in I}$. The initial element of $I$ will be denoted by $0$. Assume that $(L|K,v)$ is unramified. For every $i\in I$ choose $a_i\in K$ such that $\nu(Q_i)=v(a_i)$ and set $\tilde Q_i=Q_i/a_i$. We set $Q_0=x$. Replacing $x$ by $\frac x{a_0}$, we may assume that $\nu(x)=0$, in other words, $Q_0=\tilde Q_0=x$. Also, we set $I_{\infty}=I\cup\{\infty\}$ and $Q_\infty=\tilde Q_\infty=g$.

\begin{Def}
In the case described above, we will say that $\{Q_i(\eta)\}_{i\in I}$ is a complete sequence of key polynomials for $(L|K,v)$.
\end{Def}
\medskip

For an element $f\in K[x]$ we denote by $f'$ its formal derivative with respect to $x$. Let $\Gamma$ denote the value group of $v$ and fix a sequence of key polynomials $\{Q_i(\eta)\}_{i\in I}$  for $(L|K,v)$. For each $i\in I$ consider the following values in $\Gamma_\infty$: 
\[
\alpha_i=\nu(Q_i')-\nu(Q_i)\mbox{ and }\beta_i=\nu(g')-\nu_i(g).
\]
Here, we write $\nu_i(g)=\nu_{Q_i}(g)$ for simplicity.

Let $\alpha$ and $\beta$ be the smallest final segments of $\Gamma_\infty$ containing
\[
\{\alpha_i\mid i\in I\}\mbox{ and }\{\beta_i\mid i\in I\},
\]
respectively.

Let $\Omega$ denote the module of K\"ahler differentials for the extension $\VR_L|\VR_K$. It is not hard to show that the $\VR_L$-module homomorphism
\begin{equation}
\Omega\lra I_\alpha/I_\beta\label{eq:themapPsi}
\end{equation}
sending $d\tilde Q_i$ to $Q'_i/a_i$, $i\in I$, is well defined and surjective (below we will prove it in the special cases needed for this paper).

\begin{Que}\label{conjectprin}
Under what conditions is the map \eqref{eq:themapPsi} an isomorphism?
\end{Que}

\begin{Obs}
In full generality, some conditions are definitely needed for \eqref{eq:themapPsi} to be an isomorphism; it is not one unconditionally. The answer depends on the structure of the possible sequences of key polynomials for $(L|K,v)$.
\end{Obs}

\subsection{Consequences of the isomorphism $\Omega\cong I_\alpha/I_\beta$}\label{mainthm}
In this section we assume that we are in a situation where \eqref{eq:themapPsi} is an isomorphism. We will show that this is the case for unramified pure extensions, for instance (see Section \ref{pureexteexts} below).

\begin{Prop}\label{propsannihilator}
The annihilator of $\Omega$ is $I_\gamma\cap \VR_L$ where
\[
\gamma=\{\beta'\in \Gamma\mid \beta'+\alpha\subseteq \beta\}.
\]
\end{Prop}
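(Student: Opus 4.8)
The plan is to forget about K\"ahler differentials altogether and compute the annihilator of the $\VR_L$-module $I_\alpha/I_\beta$ directly, since by the standing assumption of this section $\Omega\cong I_\alpha/I_\beta$ and the annihilator is a module invariant. By definition an element $c\in\VR_L$ lies in ${\rm ann}(\Omega)$ if and only if $c\cdot I_\alpha\subseteq I_\beta$, so the entire computation reduces to understanding multiplication by $c$ at the level of the value group. The expected payoff is that the inclusion $cI_\alpha\subseteq I_\beta$ becomes a purely arithmetic statement about final segments of $\Gamma_\infty$, which is exactly the content of the set $\gamma$.

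First I would record the elementary value-theoretic description of the modules $I_S$. Since $\alpha$ and $\beta$ are final segments, the defining condition ``$vb\geq s$ for some $s\in S$'' is equivalent to ``$vb\in S$'': if $vb\geq s$ with $s\in\alpha$, then $vb\in\alpha$ because $\alpha$ is a final segment. Hence $I_\alpha=\{b\in L\mid vb\in\alpha\}$ and likewise $I_\beta=\{b\in L\mid vb\in\beta\}$. As $v$ is a valuation, $v(cb)=vc+vb$, so $cI_\alpha\subseteq I_\beta$ translates into the condition that $vc+vb\in\beta$ for every $b\in L$ with $vb\in\alpha$. The next step is to replace the quantification over $b\in L$ by the combinatorial inclusion $vc+\alpha\subseteq\beta$. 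One direction is immediate. For the other, given $a\in\alpha$: the $\alpha_i=\nu(Q_i')-\nu(Q_i)$ lie in $\Gamma=vL$, so every finite element of $\alpha$ is realized as $vb$ for some $b\in L$, which yields $vc+a=vc+vb\in\beta$; the only remaining element $\infty$ is harmless, as $vc+\infty=\infty\in\beta$ (every final segment of $\Gamma_\infty$ contains its top element). Thus $c\in{\rm ann}(\Omega)$ if and only if $vc+\alpha\subseteq\beta$, that is, $vc\in\gamma$.

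Finally I would verify that $\gamma$ is itself a final segment of $\Gamma$: if $\beta'+\alpha\subseteq\beta$ and $\beta''\geq\beta'$, then for each $a\in\alpha$ one has $\beta''+a\geq\beta'+a\in\beta$, hence $\beta''+a\in\beta$ since $\beta$ is a final segment, so $\beta''\in\gamma$. Consequently $I_\gamma=\{b\in L\mid vb\in\gamma\}$ and therefore $I_\gamma\cap\VR_L=\{c\in\VR_L\mid vc\in\gamma\}$; the intersection with $\VR_L$ is needed precisely because $\gamma$ may contain negative values while the annihilator is an ideal of $\VR_L$, forcing $vc\geq 0$. Combining this with the previous paragraph gives ${\rm ann}(\Omega)=\{c\in\VR_L\mid vc\in\gamma\}=I_\gamma\cap\VR_L$, as claimed. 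The only genuinely delicate point is the passage from the module inclusion $cI_\alpha\subseteq I_\beta$ to the arithmetic inclusion $vc+\alpha\subseteq\beta$, which is exactly where the unramified hypothesis $vL=vK=\Gamma$ enters, guaranteeing that every finite element of $\alpha$ is realized by an actual element of $L$; everything else is formal manipulation of final segments.
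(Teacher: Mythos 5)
Your proof is correct and takes essentially the same route as the paper: transfer the annihilator through the isomorphism $\Omega\cong I_\alpha/I_\beta$, reduce the inclusion $c\,I_\alpha\subseteq I_\beta$ to the arithmetic condition $vc+\alpha\subseteq\beta$, and identify that condition with $vc\in\gamma$. One small quibble: realizing every finite element of $\alpha$ as $vb$ for some $b\in L$ requires nothing beyond $\Gamma=vL$ being the value group of $(L,v)$, so the unramified hypothesis is not what makes that step work; apart from this misattribution, your extra care with final segments simply fills in details the paper leaves implicit.
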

\begin{proof}
Indeed, since two isomorphic $\VR_L$-modules have the same annihilator, we deduce that
\[
{\rm ann}(\Omega)={\rm ann}\left(I_\alpha/I_\beta\right).
\]
Now an element $a\in \VR_L$ annihilates $I_\alpha/I_\beta$ if and only if $va+\alpha\subseteq \beta$. Clearly, this happens if and only if $va\in \gamma$.
\end{proof}

\begin{Def}
The {\bf invariance subgroup} of a final segment $S$ of $\Gamma$ is defined as
\[
H(S):=\{\beta'\in \Gamma\mid \beta'+S=S\}.
\]
\end{Def}
\begin{Obs}
If $\alpha\neq \beta$, then $H(\alpha)<\gamma$. 
\end{Obs}
Another important consequence of the isomorphism in \eqref{eq:themapPsi} is about the finite generation of $\Omega$.
\begin{Cor}
If $\Omega$ is finitely generated as $\VR_L$-module, then it is generated by a single element.
\end{Cor}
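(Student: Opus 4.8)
The plan is to exploit the isomorphism $\Omega\cong I_\alpha/I_\beta$ together with Proposition \ref{propsannihilator} and reduce everything to a statement about the final segments $\alpha$ and $\beta$ of $\Gamma_\infty$. Since $I_\alpha/I_\beta$ is generated as an $\VR_L$-module by the images of elements $b\in I_\alpha$ (with $vb$ running through $\alpha$), the module is generated by a single element precisely when there is one value $\gamma_0\in\alpha$ such that every $b\in I_\alpha$ is congruent modulo $I_\beta$ to an $\VR_L$-multiple of a fixed generator of value $\gamma_0$. So the first step is to rephrase ``finitely generated'' and ``generated by one element'' purely in terms of $\alpha$, $\beta$, and the order structure of $\Gamma_\infty$.

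First I would observe that, because $\alpha$ is a final segment, the quotient $I_\alpha/I_\beta$ is built from the values lying in $\alpha\setminus\beta$. A finite generating set $b_1,\dots,b_n$, say with $vb_1\le\cdots\le vb_n$, has the property that every element of $I_\alpha$ is an $\VR_L$-combination of the $b_j$ modulo $I_\beta$; comparing values, the smallest value $vb_1$ must already be small enough that $vb_1+\VR_L$ (i.e.\ $\{vb_1+\rho\mid\rho\in vL,\ \rho\ge 0\}$) covers $\alpha$ modulo $\beta$. Concretely, if $b_1$ has the least value among the generators, then for any $b\in I_\alpha$ we have $b/b_1\in\VR_L$ modulo the relation imposed by $I_\beta$, so $b_1$ alone already generates $I_\alpha/I_\beta$. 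The key point is that a final segment $\alpha$, viewed through the nonnegative cone $vL_{\ge 0}=v\VR_L$, is ``principal from below'': once you can reach its infimum (or a sufficiently small element of it) you can reach all of it by multiplying by elements of $\VR_L$, which only increase values.

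The main step is therefore to show: if $I_\alpha/I_\beta$ is finitely generated, then $\alpha$ admits a single value $\gamma_0$ with $\gamma_0+v\VR_L\supseteq\alpha$ modulo $\beta$, namely the value $\min_j vb_j$ of the generator of least value. Given such $\gamma_0$, any $c\in I_\alpha$ with $vc=\gamma_0$ serves as a single generator: for arbitrary $b\in I_\alpha$, either $vb\in\beta$ (so $b\equiv 0$) or $vb\in\alpha\setminus\beta$, and then $vb-\gamma_0\in v\VR_L$, whence $b/c\in\VR_L$ and $b\equiv (b/c)c$ modulo $I_\beta$. This upgrades a finite generating set to a single generator. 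I would present this computation at the level of values, using only that $I_S=\{b\in L\mid vb\in S\}$ for a final segment $S$ and that $v\VR_L$ is exactly the nonnegative cone of $vL$.

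The hard part will be handling the corner cases where the infimum of $\alpha$ is not attained, or where $\alpha$ and $\beta$ interact so that $\alpha\setminus\beta$ has no least value. In such a situation there may be no single value $\gamma_0$ as above, and then I expect $I_\alpha/I_\beta$ is genuinely \emph{not} finitely generated --- which is consistent with the statement, since the conclusion is only asserted under the hypothesis that $\Omega$ is finitely generated. So the real content is the contrapositive-flavored argument that finite generation of $I_\alpha/I_\beta$ forces the minimal generator value to exist and to dominate the whole segment; once that is established, promoting finitely many generators to one is the short verification above. I would also double-check the boundary behavior at $\infty\in\Gamma_\infty$ (coming from $g$) so that the value comparisons remain valid in $\Gamma_\infty$ rather than just $\Gamma$.
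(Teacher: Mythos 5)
Your proposal is correct and follows essentially the same route as the paper: both arguments pick the generator $a_1$ of least value from a finite generating set and observe that every other generator $a_\ell$ equals $\frac{a_\ell}{a_1}a_1$ with $\frac{a_\ell}{a_1}\in\VR_L$, so $a_1+I_\beta$ alone generates $I_\alpha/I_\beta\cong\Omega$. The paper stops there, whereas you additionally verify that an arbitrary $b\in I_\alpha\setminus I_\beta$ satisfies $vb\ge\min_j vb_j$; this is a harmless (and correct, once one expands $b$ in the generators) elaboration of the same idea.
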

\begin{proof}
Assume that $I_\alpha/I_\beta$ is finitely generated, say, by
\[
\{a_1+I_\beta,\ldots,a_n+I_\beta\}\mbox{ with }a_1,\ldots,a_n\in I_\alpha.
\]
Suppose, without loss of generality, that $v(a_1)\leq v(a_\ell)$ for every $\ell$, $1< \ell\leq n$. Then for every such $\ell$ we have
\[
a_\ell+I_\beta=\frac{a_\ell}{a_1}\left (a_1+I_\beta\right).
\]
Hence, $I_\alpha/I_\beta$ is generated by $\alpha_1+I_\beta$.  
\end{proof}

The above result can be interpreted as follows.

\begin{Prop}\label{finitelygenerated}
Assume that $\Omega\neq(0)$. Then $\Omega$ is finitely generated as an $\VR_L$-module if and only if $\alpha$ has a smallest element.
\end{Prop}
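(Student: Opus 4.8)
The plan is to reduce everything to the structure of the quotient module $I_\alpha/I_\beta$, since by the standing assumption of this section the map \eqref{eq:themapPsi} is an isomorphism, so $\Omega\cong I_\alpha/I_\beta$ and the question of finite generation transfers verbatim. The hypothesis $\Omega\neq(0)$ guarantees $I_\alpha\neq I_\beta$, i.e. $\alpha\neq\beta$ (otherwise the quotient vanishes), so there genuinely is something to generate. By the preceding Corollary, if $I_\alpha/I_\beta$ is finitely generated then it is in fact cyclic, generated by a single class $a+I_\beta$ with $a\in I_\alpha$. The whole proof therefore comes down to identifying precisely when $I_\alpha/I_\beta$ admits a single generator, and matching that condition to the assertion that the final segment $\alpha$ has a least element.

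First I would prove the easy direction: assume $\alpha$ has a smallest element $\alpha_0$. Pick $a\in L$ with $va=\alpha_0$; then $a\in I_\alpha$, and I claim $a+I_\beta$ generates $I_\alpha/I_\beta$. Indeed, any $b\in I_\alpha$ has $vb\geq\alpha_0=va$, so $b/a\in\VR_L$ and $b=(b/a)\,a$, whence $b+I_\beta=(b/a)(a+I_\beta)$. Thus the quotient is cyclic and $\Omega$ is finitely generated.

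For the converse I would argue contrapositively: suppose $\alpha$ has no smallest element and show $I_\alpha/I_\beta$ is not finitely generated, equivalently (by the Corollary) not cyclic. If it were generated by a single class $a+I_\beta$ with $va=\gamma_0\in\alpha$, then since $\alpha$ has no least element there is some $\gamma_1\in\alpha$ with $\gamma_1<\gamma_0$. Choosing $c\in I_\alpha$ with $vc=\gamma_1$, the class $c+I_\beta$ would have to equal $(c/a)(a+I_\beta)$ for some element of $\VR_L$ acting on it; but expressing $c+I_\beta$ as an $\VR_L$-multiple of $a+I_\beta$ forces a relation $c-ra\in I_\beta$ with $r\in\VR_L$, and $v(ra)\geq va=\gamma_0>\gamma_1=vc$ gives $v(c-ra)=vc=\gamma_1$, so $c-ra\in I_\beta$ would require $\gamma_1\in\beta$. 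I would then need to close this off by showing such a $\gamma_1$ can be chosen outside $\beta$: because $\alpha\neq\beta$ and $\alpha\supseteq\beta$ (as $\beta$ is the smaller segment in the nonzero quotient), the set $\alpha\setminus\beta$ is nonempty, and using that $\alpha$ has no least element one produces an element of $\alpha\setminus\beta$ strictly below any prescribed $\gamma_0$, contradicting cyclicity.

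The main obstacle is this last converse step: making the choice of $\gamma_1\in\alpha\setminus\beta$ below a given generator value rigorous, and handling the interaction between the two final segments $\alpha$ and $\beta$ cleanly (in particular confirming $\beta\subseteq\alpha$ in the relevant setting and that $\alpha\setminus\beta$ has no least element when $\alpha$ does not). The algebraic manipulations with values are routine; the delicate point is purely order-theoretic bookkeeping on final segments of $\Gamma_\infty$, ensuring that the failure of a least element in $\alpha$ propagates to a genuine obstruction to cyclicity rather than being absorbed into $\beta$.
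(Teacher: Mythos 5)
Your proof is correct and follows the same route the paper intends: reduce via the isomorphism $\Omega\cong I_\alpha/I_\beta$ and the preceding Corollary (finitely generated implies cyclic) to the question of when the quotient is cyclic, which holds exactly when the final segment $\alpha$ has a least element. The order-theoretic point you flag as the main obstacle closes in one line: final segments of $\Gamma$ are totally ordered by inclusion, so $\Omega\neq(0)$ forces $\beta\subsetneq\alpha$; and since the complement of $\beta$ is an initial segment, any $\gamma_1\in\alpha$ chosen below both a fixed element of $\alpha\setminus\beta$ and the value of the proposed generator automatically lies in $\alpha\setminus\beta$, which is exactly what your contradiction requires.
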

We can use the isomorphism \eqref{eq:themapPsi} to answer the question of when $\Omega$ is finitely presented.
\begin{Cor}\label{finitelypresentes}
Assume that $\Omega\neq(0)$. Then $\Omega$ is finitely presented as an $\VR_L$-module if and only if each of $\alpha$ and $\beta$ has a smallest element.
\end{Cor}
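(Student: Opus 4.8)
The plan is to deduce Corollary \ref{finitelypresentes} from the isomorphism $\Omega\cong I_\alpha/I_\beta$ together with Proposition \ref{finitelygenerated}. The forward direction of the latter already tells us that finite generation of $\Omega$ is equivalent to $\alpha$ having a smallest element, and finite presentation implies finite generation; so for the ``only if'' part I may freely assume $\alpha$ has a smallest element $\alpha_0$ and must extract from finite presentation the additional conclusion that $\beta$ has a smallest element. For the ``if'' part, assuming both $\alpha$ and $\beta$ have smallest elements, I would write down an explicit finite presentation of $I_\alpha/I_\beta$.

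First I would set up the ``if'' direction concretely. Suppose $\alpha$ has smallest element $\alpha_0$ and $\beta$ has smallest element $\beta_0$. Fix $a\in L$ with $va=\alpha_0$, so that $a+I_\beta$ generates $I_\alpha/I_\beta$ by the single-generator argument in the Corollary preceding Proposition \ref{finitelygenerated}. The presentation then comes from computing the kernel of the surjection $\VR_L\to I_\alpha/I_\beta$, $c\mapsto ca+I_\beta$. An element $c\in\VR_L$ lies in this kernel precisely when $ca\in I_\beta$, i.e.\ when $vc+\alpha_0\geq\beta_0$, i.e.\ when $vc\geq\beta_0-\alpha_0$. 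Since $\beta_0$ is a smallest element, this condition $\{c\in\VR_L\mid vc\geq\beta_0-\alpha_0\}=I_{\beta_0-\alpha_0}\cap\VR_L$ is itself a principal ideal of $\VR_L$ generated by any element of value $\beta_0-\alpha_0$ (using that $\beta_0-\alpha_0$ is attained in $\Gamma$). Thus the kernel is finitely generated (by one element), and $I_\alpha/I_\beta$ is the cokernel of a map $\VR_L\to\VR_L$, hence finitely presented.

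For the ``only if'' direction I would argue contrapositively: assuming $\alpha$ has a smallest element (forced by finite generation) but $\beta$ does not, I would show the single relation module, the kernel $\ker(\VR_L\to I_\alpha/I_\beta)=\{c\mid vc+\alpha_0\in\beta\}$, fails to be finitely generated. Concretely, the kernel is $\{c\in\VR_L\mid vc+\alpha_0\in\beta\}$; because $\beta$ has no smallest element, the set of values $vc$ appearing here has no smallest element either, and a submodule of $\VR_L$ of this shape (a non-principal ideal-like module determined by a final segment with no least element) is not finitely generated by the same valuation-theoretic argument used throughout. The subtlety I must handle is that finite presentation of $\Omega$ is an intrinsic property that does not, a priori, single out this particular presentation; so I would invoke the standard fact that if a module is finitely presented and also generated by a single element, then the kernel of \emph{every} surjection from a finitely generated free module is finitely generated, and in particular the kernel of the canonical single-generator surjection above must be finitely generated. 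This reduces the statement to analyzing that one explicit kernel.

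The main obstacle I expect is precisely this last reduction: translating the basis-free notion of ``finitely presented'' into a statement about the concrete kernel $\{c\mid vc+\alpha_0\in\beta\}$, and then characterizing finite generation of that kernel in terms of $\beta$ having a least element. The module-theoretic input (finite presentation is independent of the chosen finite presentation, so one may compute with the one-generator presentation coming from Proposition \ref{finitelygenerated}) is routine but must be stated carefully; the genuinely computational heart is the equivalence, for a final segment $S$ of $\Gamma$, between ``$I_S\cap\VR_L$ is finitely generated over $\VR_L$'' and ``$S$ has a smallest element,'' which is the same valuation-ideal principle already underlying Proposition \ref{finitelygenerated} and which I would simply reuse.
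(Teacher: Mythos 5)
Your proposal is correct and follows essentially the same route as the paper: both reduce to the single-generator surjection $\VR_L\to I_\alpha/I_\beta$, $c\mapsto c\left(a+I_\beta\right)$, identify its kernel with $\{c\in \VR_L\mid vc+\alpha_0\in\beta\}$, and observe that this kernel is finitely generated (equivalently, principal, by comparing values of any finite generating set) exactly when $\beta$ has a smallest element. You are in fact slightly more explicit than the paper about the module-theoretic point that finite presentation forces the kernel of this particular surjection to be finitely generated.
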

\begin{proof}
If $\alpha$ does not have a smallest element (and $\Omega\neq(0)$), then $I_\alpha/I_\beta$ is not finitely generated, so it is not finitely presented.

Suppose that $\alpha$ has a smallest element, say $\alpha_0=va$ for some $a\in L$. The $\VR_L$-relations on $\{a+I_\beta\}$ are of the form
\[
b\left(a+I_\beta\right)=0\mbox{ for some }b\in \VR_L.
\]
This happens if and only if $v(ab)\in\beta$.

Take finitely many $b_1,\ldots,b_r\in \VR_L$ such that $v(ab_\ell)\in \beta$ for every $\ell$, $1\leq \ell\leq r$. Suppose, without loss of generality, that $v(b_1)\leq v(b_\ell)$ for every $\ell$, $1\leq \ell\leq r$. Then all the relations
\[
b_1\left(a+I_\beta\right)=0,\ldots,b_r\left(a+I_\beta\right)=0,
\]
are generated by
\[
b_1\left(a+I_\beta\right)=0.
\]
In particular, $\beta$ has a smallest element if and only if $I_\alpha/I_\beta$ is finitely presented.
\end{proof}

\section{Pure extensions}\label{pureexteexts}
The main goal of section is to show that \eqref{eq:themapPsi} is an isomorphism for unramified pure extensions.
\begin{Def}\label{definintiaopure}
Let $(L|K,v)$ be a simple immediate extension of valued fields and take $\eta\in L$ such that $L=K(\eta)$. We say that the extension $(L|K,v)$ is {\bf pure immediate} in $\eta$ if for every $f\in K[x]$, $\deg(f)<[L:K]$, there exists $c\in K$ such that for every $b\in K$ we have
\begin{equation}\label{fiexepolyn}
v(\eta-b)\geq v(\eta-c)\Lra v(f(b))=v(f(c)).
\end{equation}
\end{Def}
\begin{Def}
When condition \eqref{fiexepolyn} is satisfied for some $f$, we say that $v(f(c))$ is the fixed value of $f$.
\end{Def}
For $b\in L$ we denote
\[
v(b-K):=\{v(b-c)\mid c\in K\}.
\]
\begin{Lema}\label{Lemapurevspol}
If the extension $(L|K,v)$ is pure immediate in $\eta$, then $v(\eta-K)$ is a complete set of key polynomials for $(L|K,v)$.
\end{Lema}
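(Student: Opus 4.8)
The plan is to show that the family of \emph{linear} polynomials $\{x-c\mid c\in K\}$ (indexed, via the bijection recalled after Theorem \ref{them2.2}, by the set $v(\eta-K)$), together with $g$ as its top element, is a complete set for $\nu$. Every linear monic polynomial is already a key polynomial, so the only thing to prove is completeness: for every $f\in K[x]\setminus(g)$ there is $c\in K$ with $\deg(x-c)=1\leq\deg f$ and $\nu_{x-c}(f)=\nu(f)$ (the case $f\in(g)$, where $\nu(f)=\infty$, is covered by the element $g$). Writing the $(x-c)$-expansion $f=\sum_\ell f_\ell(x-c)^\ell$ with $f_\ell\in K$ and $f_0=f(c)$, and using $\nu(x-c)=v(\eta-c)$, we have $\nu_{x-c}(f)=\min_\ell\left(v(f_\ell)+\ell\,v(\eta-c)\right)$, while $\nu(f)=v\left(\sum_\ell f_\ell(\eta-c)^\ell\right)\geq\nu_{x-c}(f)$. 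Thus the whole content is to produce a center $c$ for which this inequality is an equality.

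First I would treat the essential case $\deg f<[L:K]$. Apply the purity hypothesis \eqref{fiexepolyn} to $f$ to obtain $c\in K$ such that $v(f(b))$ equals the fixed value $v(f(c))$ for every $b\in K$ with $v(\eta-b)\geq v(\eta-c)$. The first step is to rephrase this: for every $t\in K$ with $v(t)\geq v(\eta-c)$ the element $b=c+t$ satisfies $v(\eta-b)\geq v(\eta-c)$, so $v\left(\sum_\ell f_\ell t^\ell\right)=v(f(c))$. Evaluating along a \emph{generic} $t$ of value exactly $v(\eta-c)$ (for which the terms realizing $\min_\ell(v(f_\ell)+\ell\,v(t))$ cannot cancel, since their leading residues form a nonzero polynomial in the residue of a normalization of $t$) yields $v(f(c))=\min_\ell\left(v(f_\ell)+\ell\,v(\eta-c)\right)=\nu_{x-c}(f)$.

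It then remains to identify $v(f(c))$ with $\nu(f)=v(f(\eta))$. Here I would invoke immediacy: $\eta$ is the pseudo-limit of a pseudo-convergent sequence $\{b_i\}\subset K$ whose values $v(\eta-b_i)$ increase and are cofinal in $v(\eta-K)$. For $i$ large, $v(\eta-b_i)\geq v(\eta-c)$, hence $v(f(b_i))=v(f(c))$; comparing $f(\eta)$ with $f(b_i)$ through the $(x-b_i)$-expansion and showing that the higher-order terms are negligible as $b_i$ approaches $\eta$ gives $v(f(\eta))=v(f(c))$, and therefore $\nu_{x-c}(f)=\nu(f)$. I expect this passage from the fixed value (a statement about $b\in K$) to the value at $\eta$, together with the control of the higher-order terms, to be the \textbf{main obstacle}; it is exactly where immediacy and the absence of a best approximation of $\eta$ in $K$ are used, and it is the step where one must check that $v(\eta-b_i)$ grows enough to dominate the relevant differences.

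Finally, for $f$ of arbitrary degree with $\nu(f)<\infty$ I would reduce modulo $g$: writing $f=gh+r$ with $\deg r<[L:K]$ gives $\nu(f)=\nu(r)$. Since the fixed value in \eqref{fiexepolyn} persists for every center above the purity threshold, the previous case produces, for $r$, a center $c'$ that may be chosen arbitrarily close to $\eta$ with $\nu_{x-c'}(r)=\nu(r)$. As $v(\eta-c')$ grows one has $\nu_{x-c'}(g)\to\infty$ (again by immediacy, since $g(\eta)=0$), so $\nu_{x-c'}(gh)>\nu(r)$ for $c'$ close enough; then $\nu_{x-c'}(f)=\nu_{x-c'}(r)=\nu(f)$, which completes the proof that $\{x-c\mid c\in K\}$ together with $g$ is a complete set of key polynomials for $(L|K,v)$.
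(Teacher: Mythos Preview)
The paper's own proof is a one-line citation to \cite[Corollary~3.4]{NS2018}, so your direct argument necessarily takes a different route. The overall architecture is correct: linear polynomials are key polynomials automatically, purity gives each $f$ of degree $<n$ a fixed value, one identifies that fixed value with $v(f(\eta))$, and then matches it with a truncation $\nu_{x-c}$. Two points need repair.

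Your genericity step in the middle paragraph (``a generic $t$ of value exactly $v(\eta-c)$ for which the terms realizing the minimum cannot cancel'') tacitly assumes the nonzero residue polynomial has a nonroot in $Kv$. If $Kv$ is finite this can fail: over $\mathbb F_p((T))$ the polynomial $t^p-t$ has value $>0$ for every $t$ with $v(t)\ge0$, yet its linear coefficient has value $0$. The clean fix is to absorb this step into the Kaplansky argument you already plan for the next paragraph: for a pseudo-Cauchy sequence $(c_i)$ with pseudo-limit $\eta$, Kaplansky's lemma yields a unique index $h$ such that $v(\partial_h f(c_i))+h\,v(\eta-c_i)$ is eventually the \emph{strict} minimum among all $v(\partial_\ell f(c_i))+\ell\,v(\eta-c_i)$; the fixed-value hypothesis forces $h=0$, so for large $i$ one gets $\nu_{x-c_i}(f)=v(f(c_i))=v(f(\eta))$ in one stroke, and no genericity over the residue field is needed.

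Your final paragraph is both unnecessary and flawed. Unnecessary because for $\deg f\ge n=\deg g$ the element $g$ itself belongs to the proposed complete set and satisfies $\nu_g(f)=\nu(f)$ directly (since $\nu(g)=\infty$, only the remainder of $f$ modulo $g$ contributes). Flawed because ``$\nu_{x-c'}(g)\to\infty$ as $c'\to\eta$'' is false whenever $v(\eta-K)$ is bounded above in $vL$ (as in any rank-one pure defect extension with $\sup v(\eta-K)<\infty$): one always has $\nu_{x-c'}(g)\le n\cdot v(\eta-c')$, which then stays bounded. Simply delete this reduction.

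With these adjustments your argument becomes a self-contained proof; the paper instead defers entirely to the key-polynomial machinery developed in \cite{NS2018}.
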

\begin{proof}
It follows from \cite[Corollary 3.4]{NS2018}.
\end{proof}

\begin{Obs} The above lemma provides a useful characterization of pure immediate extensions: an immediate extension is pure immediate if and only if all of the key polynomials for $(L|K,v)$ are linear. Below we will partially extend this characterization to pure extensions that are not necessarily immediate.
\end{Obs}

\begin{Cor}\label{corfingenera}
Assume that $(L|K,v)$ is pure immediate in $\eta$ and take $\{c_i\}_{i\in I}\subset K$  such that $\{v(\eta-c_i)\}_{i\in I}$ is cofinal in $v(\eta-K)$. For each $i\in I$ take $a_i\in K$ such that $v(\eta-c_i)=v(a_i)$. Then
\[
\VR_L=\VR_K\left[\left.\frac{\eta-c_i}{a_i}\ \right|\ i\in I\right].
\] 
\end{Cor}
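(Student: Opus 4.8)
The plan is to deduce Corollary \ref{corfingenera} from Proposition \ref{Corfinitelgenalg} by verifying that the linear polynomials $\{x-c_i\}_{i\in I}$ form a complete set for $\nu$ and that they already satisfy the $K$-proportionality/normalization built into that proposition. First I would invoke Lemma \ref{Lemapurevspol}, which tells us that since $(L|K,v)$ is pure immediate in $\eta$, the full family $v(\eta-K)$ --- equivalently the set of linear monic polynomials $\{x-c\mid c\in K\}$ --- is a complete set of key polynomials for $(L|K,v)$. The issue is that this is the \emph{full} family, whereas the corollary uses only a cofinal subfamily indexed by $I$; so the first real step is to argue that a subfamily $\{x-c_i\}_{i\in I}$ whose values $\{v(\eta-c_i)\}_{i\in I}$ are cofinal in $v(\eta-K)$ is itself still a complete set. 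This should follow from the definition of complete set together with truncation: given $f\in K[x]$ of degree one, say $f=x-b$, I would pick $i\in I$ with $v(\eta-c_i)\geq v(\eta-b)$ (possible by cofinality) and check that $\nu_{x-c_i}(f)=\nu(f)$, using that the $(x-c_i)$-expansion of $x-b$ is $(x-c_i)+(c_i-b)$ and that $v(\eta-b)=\min\{v(\eta-c_i),v(c_i-b)\}$ forces the truncation to compute the correct value.

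Next I would pass to the normalized family. Since $(L|K,v)$ is immediate we have $e(L|K,v)=1$, so Proposition \ref{Corfinitelgenalg} applies: for any complete set of polynomials for $\nu$ there is a $K$-proportional complete set $\tilde{\textbf Q}$ with $\VR_L=\VR_K[\tilde{\textbf Q}(\eta)]$. Applying this to the complete set $\{x-c_i\}_{i\in I}$, and taking the $a_i\in K$ with $v(\eta-c_i)=v(a_i)$ as in the statement, the $K$-proportional normalization is exactly $\tilde Q_i=(x-c_i)/a_i$, so that $\tilde Q_i(\eta)=(\eta-c_i)/a_i$. Substituting into \eqref{formulagebfvalr} then yields
\[
\VR_L=\VR_K\left[\left.\frac{\eta-c_i}{a_i}\ \right|\ i\in I\right],
\]
which is the assertion. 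I would remark that by Lemma \ref{propostiset} this normalized set is again complete (each normalized generator has value $0$), so the hypotheses of Proposition \ref{Corfinitelgenalg} are genuinely met.

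The main obstacle I anticipate is the cofinality reduction in the first step: one must confirm that thinning out the complete family to a cofinal subfamily preserves completeness. Because every polynomial involved here is linear, this reduces to the single degree-one computation sketched above, and the key point is that cofinality of the \emph{values} $v(\eta-c_i)$ suffices to realize the value $\nu(f)=v(\eta-b)$ by truncation at some $x-c_i$ --- there is no higher-degree obstruction precisely because purity guarantees all key polynomials are linear. Once this is settled, the remainder is a direct application of the already-established generation result, and the only care needed is to match the normalization $a_i$ in the corollary with the $K$-proportionality in Proposition \ref{Corfinitelgenalg}.
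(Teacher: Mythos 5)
Your overall route is exactly the paper's: the printed proof of Corollary \ref{corfingenera} is the single line ``Follows from Proposition \ref{Corfinitelgenalg}'', and your unpacking of that reduction --- immediate implies $e(L|K,v)=1$, and the normalization $\tilde Q_i=(x-c_i)/a_i$ with $v(a_i)=\nu(x-c_i)$ is precisely a $K$-proportional set with $\nu(\tilde Q_i)=0$, so Lemma \ref{propostiset}, Theorem \ref{Theoremkeypol} and the proof of Proposition \ref{Corfinitelgenalg} apply to it --- is correct and is what the authors intend.

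There is, however, a genuine gap in your justification of the cofinality reduction. You verify completeness of $\{x-c_i\}_{i\in I}$ only for test polynomials $f$ of degree one, and you assert that ``because every polynomial involved here is linear, this reduces to the single degree-one computation.'' That is not right: completeness is quantified over \emph{all} $f\in K[x]$; it is only the polynomials $q$ at which one truncates that are linear. For $f$ with $2\le\deg f<[L:K]$ you must still produce some $i$ with $\nu_{x-c_i}(f)=\nu(f)$, and this does not follow from the degree-one case --- it is exactly where the purity hypothesis enters for general $f$. Two ways to close the gap: (a) invoke the construction described in the remark following Theorem \ref{them2.2}, by which any family of key polynomials whose values are cofinal in $\nu(\Psi_m)$ for each nonempty $\Psi_m$ (here only $m=1$ and $m=\deg g$, the latter contributing only $g$ with $g(\eta)=0$) is a complete sequence; or (b) combine Lemma \ref{Lemapurevspol}, which gives for each $f$ some $c\in K$ with $\nu_{x-c}(f)=\nu(f)$, with the monotonicity of truncations at key polynomials of a fixed degree, $\nu_{x-c}(f)\le\nu_{x-c_i}(f)\le\nu(f)$ whenever $v(\eta-c_i)\ge v(\eta-c)$, which by cofinality pins down $\nu_{x-c_i}(f)=\nu(f)$ for suitable $i$. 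With that repair, the remainder of your argument goes through as written.
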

\begin{proof}
Follows from Proposition \ref{Corfinitelgenalg}.
\end{proof}
\begin{Def}
The extension $(L|K,v)$ is called \textbf{unibranched} if $v$ is the unique extension of $v|_K$ to $L$.
\end{Def}
The main interest in \cite{CKR} is to study pure extensions when $(L|K,v)$ is algebraic and unibranched (hence $d(L|K,v)=[L:K]$). In view of Lemma \ref{Lemapurevspol}, we extend the definition of pure extensions to the case of algebraic extensions that are not necessarily immediate. 
\begin{Def}
Assume that $(L|K,v)$ is a simple algebraic extension of valued fields. Then we say that:
\begin{description}
\item[(i)] $(L|K,v)$ has \textbf{pure defect} if it is pure immediate and unibranched (hence $d(L|K,v)=[L:K]$). 

\item[(ii)] $(L|K,v)$ is \textbf{branched pure} if it is pure immediate but not unibranched.

\item[(iii)] $(L|K,v)$ is \textbf{purely ramified} if
\[
vL/vK\mbox{ is cyclic and }[L:K]=(vL:vK).
\]

\item[(iv)] $(L|K,v)$ is \textbf{purely inertial} if
\[
Lv|Kv\mbox{ is simple and }[L:K]=[Lv:Kv].
\]
\end{description}
\end{Def}

From now on, when referring to pure extensions we will mean any extension as one appearing in the definition above.
\begin{Prop}\label{proposgenepure}
For any extension of valued fields of the forms \textbf{(i)} to \textbf{(iv)} above there exists a generator $\eta$ of $L|K$ such that the set $v(\eta-K)$ is a complete set of key polynomials for $(L|K,v)$.
\end{Prop}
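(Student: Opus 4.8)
The plan is to treat the four types separately, reducing the two immediate cases to a result already proved and handling the two non-immediate cases by an explicit choice of generator. For the immediate types \textbf{(i)} and \textbf{(ii)}, the extension is by hypothesis pure immediate in some generator $\eta$, so there is nothing new to do: Lemma \ref{Lemapurevspol} says precisely that $v(\eta-K)$ is then a complete set of key polynomials for $(L|K,v)$. The content of the proposition therefore lies in the purely ramified and purely inertial cases, and for these I would first record a convenient reformulation. Saying that $v(\eta-K)$ is a complete set of key polynomials for $(L|K,v)$ amounts to saying that $\{x-c\mid c\in K\}\cup\{g\}$ is a complete set for $\nu$, i.e. that every key polynomial other than $g$ is linear. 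For a nonconstant $f$ with $\deg f\ge\deg g$ this is automatic: writing the $g$-expansion $f=f_0+f_1g+\cdots$ and using $\nu(g)=\infty$, one gets $\nu_g(f)=\nu(f_0)=v(f(\eta))=\nu(f)$, so $g$ itself is an admissible truncation. Hence everything reduces to proving, for $f$ with $1\le\deg f<\deg g$, the \emph{no-cancellation statement} $\nu_x(f)=\nu(f)$; then $\{x\}\cup\{g\}$, and a fortiori $\{x-c\mid c\in K\}\cup\{g\}$, is complete.

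For the purely ramified case I would choose $\eta\in L$ with $v\eta$ generating the cyclic group $vL/vK$. Since $v\eta$ has order $e:=[L:K]$ in $vL/vK$, we have $vK(\eta)=vL$, so the fundamental inequality gives $[L:K]=e(K(\eta)|K,v)\le [K(\eta):K]\le[L:K]$, forcing $K(\eta)=L$ and $\deg g=e$. The crux is now the no-cancellation statement. For $f=\sum_j f_jx^j$ with $\deg f<e$ the exponents $j$ run over $\{0,\dots,e-1\}$, and since $v\eta$ has order $e$ the cosets $j\,v\eta+vK$ are pairwise distinct; consequently the values $v(f_j)+j\,v\eta$ of the nonzero terms lie in distinct cosets, hence are pairwise distinct. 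The ultrametric inequality then gives $v(f(\eta))=\min_j\bigl(v(f_j)+j\,v\eta\bigr)=\nu_x(f)$, as required. Finally $v(\eta-K)$ attains its maximum at $v\eta=v(\eta-0)$, because $v(\eta-c)=\min(v\eta,vc)$ for all $c\in K$ (the two values are never equal, as $v\eta\notin vK\ni vc$), so the single linear polynomial $x$ gives a cofinal family which, together with $g$, is complete.

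The purely inertial case runs in parallel, with residue-field linear independence replacing coset distinctness. Here I would take $\eta\in\VR_L$ a lift of a generator $\bar\eta$ of the simple extension $Lv|Kv$. Then $K(\eta)v=Kv(\bar\eta)=Lv$, so $[L:K]=f(K(\eta)|K,v)\le[K(\eta):K]\le[L:K]$ again gives $K(\eta)=L$ and $\deg g=[L:K]$, while $v\eta=0$ since $\bar\eta\ne0$. For $f=\sum_j f_jx^j$ with $\deg f<[L:K]$, rescale by an element of $K$ so that $\min_j v(f_j)=0$; then $f(\eta)\in\VR_L$ and its residue equals $\sum_j\overline{f_j}\,\bar\eta^{\,j}$, a nonzero $Kv$-combination of $1,\bar\eta,\dots,\bar\eta^{\,[L:K]-1}$, which are linearly independent over $Kv$ because $[Kv(\bar\eta):Kv]=[L:K]$. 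Hence $v(f(\eta))=0=\nu_x(f)$, which is the no-cancellation statement. As before $v(\eta-K)$ attains its maximum $0$ at $c=0$, so $x$ together with $g$ is complete.

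The step I expect to be the main obstacle is the no-cancellation lemma in its two incarnations, since this is where the arithmetic of the extension actually enters; everything else is bookkeeping resting on the $g$-expansion and the fundamental inequality. A secondary point requiring care is the correct reading of the phrase ``$v(\eta-K)$ is a complete set of key polynomials for $(L|K,v)$'': in the ramified and inertial cases $v(\eta-K)$ has a largest element, so the associated family of linear key polynomials collapses to the single polynomial $x$, and the minimal polynomial $g$ must be adjoined as the top (limit) element in order to obtain an honest complete set for $\nu$.
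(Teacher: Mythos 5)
Your proposal is correct and follows the same route as the paper: cases \textbf{(i)} and \textbf{(ii)} are delegated to Lemma \ref{Lemapurevspol}, and in cases \textbf{(iii)} and \textbf{(iv)} the generator $\eta$ is chosen exactly as in the paper's proof (with $v\eta$ generating $vL$ over $vK$, respectively $\eta v$ generating $Lv$ over $Kv$), reducing everything to the identity $\nu(f)=\nu_x(f)$ for $\deg f<[L:K]$. The only difference is that you spell out the coset-distinctness and residue-independence arguments that the paper dismisses as ``easy to show,'' together with the fundamental-inequality argument that $K(\eta)=L$; these details are accurate.
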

\begin{proof}
The cases \textbf{(i)} and \textbf{(ii)} follow from Lemma \ref{Lemapurevspol}. In the case \textbf{(iii)} take $\eta\in L$ such that $v\eta$ generates $vL$ over $vK$. In the case \textbf{(iv)} take $\eta\in \VR_L$ such that $\eta v$ generates $Lv$ over $Kv$. In both cases, $\eta$ generates $L$ over $K$. Moreover, if $\nu$ is the valuation of $K[x]$ induced by $v$ and $\eta$, then it is easy to show that for every $f\in K[x]$, $\deg(f)<[L:K]$, we have
\[
\nu(f)=\nu_{x}(f).
\]
\end{proof}
In the case of purely inertial or purely ramified as above, we say that $(L|K,v)$ is pure in $\eta$.

In what follows we present the proof of the isomorphism
\begin{equation}
\Omega\cong I_\alpha/I_\beta\label{eq:Omega=IalphaoverIbeta}
\end{equation}
for unramified pure extensions. This proof is a simplified version of the general proof of this result which will be presented in future work.

\subsection{Proof of \eqref{eq:Omega=IalphaoverIbeta} for pure defect, branched pure or purely inertial extensions}\label{proofofconjes}
Suppose that $(L|K,v)$ is a pure defect, branched pure or purely inertial extension in $\eta$. Let $n=[L:K]$. Take a sequence
\[
\textbf Q=\{Q_i\}_{i\in I}\mbox{ where } Q_i=x-c_i\mbox{ for some }c_i\in K,
\]
such that $\{v(\eta-c_i)\}_{i\in I}$ is cofinal in $v(\eta-K)$.
\begin{Obs}
In the purely inertial case, we can choose $\textbf{Q}$ to be a one-element set.
\end{Obs}
Consider the corresponding $\VR_L$-submodules $I_\alpha$ and $I_\beta$ of $L$.
\begin{Teo}\label{Trheconjecaseoure}
We have the isomorpihsm \eqref{eq:Omega=IalphaoverIbeta} of $\VR_L$-modules.
\end{Teo}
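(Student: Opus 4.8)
The goal is to establish the isomorphism $\Omega\cong I_\alpha/I_\beta$ for the three listed types of pure extension. The plan is to exploit the fact (Proposition \ref{proposgenepure}) that in each of these cases all the key polynomials are linear, so that by Corollary \ref{corfingenera} the valuation ring $\VR_L$ is generated over $\VR_K$ by the elements $\tilde Q_i=(\eta-c_i)/a_i$, $i\in I$. Since $\Omega$ is generated as an $\VR_L$-module by the differentials $\{d\tilde Q_i\}_{i\in I}$ together with $d g$-type relations coming from the minimal polynomial, the first step is to pin down precisely the generators and the defining relations of $\Omega$ as a presentation $F/R$, where $F$ is the free $\VR_L$-module on the symbols $\{d\tilde Q_i\}$. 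The second step is to show that the map \eqref{eq:themapPsi} sending $d\tilde Q_i\mapsto Q_i'/a_i = 1/a_i$ (note $Q_i'=1$ since $Q_i$ is linear) lands in $I_\alpha/I_\beta$, is well defined, and is surjective; surjectivity is essentially the statement that the values $v(1/a_i)=-\nu(Q_i)=\alpha_i$ (using $\nu(Q_i')=0$) are cofinal in $\alpha$, which holds by the very definition of $\alpha$ as the smallest final segment containing the $\alpha_i$.

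\emph{The core computation.} The heart of the argument is injectivity, i.e.\ showing that every relation among the $d\tilde Q_i$ in $\Omega$ is already visible in $I_\alpha/I_\beta$. First I would compute $\alpha_i$ and $\beta_i$ explicitly in the linear case. Because $Q_i=x-c_i$ we have $Q_i'=1$, hence $\alpha_i=\nu(Q_i')-\nu(Q_i)=-\nu(\eta-c_i)$, giving the clean formula \eqref{formulaalpha}. For $\beta_i=\nu(g')-\nu_i(g)$ one must use the $Q_i$-expansion of $g$ and the fact that $g(\eta)=0$ to evaluate $\nu_i(g)=\nu_{Q_i}(g)$; since the $\tilde Q_i$ form a cofinal family in $v(\eta-K)$, the truncations $\nu_i(g)$ approach $\nu(g)=\infty$ and one extracts the formula \eqref{formulabeta} for $\beta$. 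The key identity to exploit is that differentiating the relation $g(\eta)=0$ yields $g'(\eta)\,d\eta=0$ in $\Omega$ modulo contributions from $d$ of the coefficients, which is what forces the submodule $I_\beta$ into the denominator: a relation $\sum_\ell b_\ell\, d\tilde Q_\ell=0$ in $\Omega$ corresponds, after clearing denominators, to membership of the associated value in $\beta$.

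\emph{Main obstacle.} I expect the hardest step to be injectivity, and more specifically controlling the relations coming from $dg=0$ and from the multiplicative relations among the generators $\tilde Q_i$ (the $\tilde Q_i$ are not algebraically independent; they are all polynomial expressions in the single element $\eta$). One must show that no relation kills more than what $I_\beta$ already accounts for. Concretely, the subtlety is that a product $\tilde Q_i\tilde Q_j$ re-expands via Theorem \ref{Theoremkeypol} into the generating set, so $d(\tilde Q_i\tilde Q_j)$ produces cross-terms; I would handle this by an induction on degree (as in the proof of Theorem \ref{Theoremkeypol}), showing that the differential of any $\VR_K$-algebra relation among the $\tilde Q_i(\eta)$ maps into $I_\beta/I_\beta=0$ under \eqref{eq:themapPsi}, so that the map descends to a well-defined inverse. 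The three cases (pure defect, branched pure, purely inertial) should be treated uniformly, with the purely inertial case being easiest since $\textbf Q$ can be taken to be a single polynomial and $\eta$ is a residue generator, so the relevant relation is $g(\eta)=0$ with $g'v\neq 0$ exactly when $Lv|Kv$ is separable.
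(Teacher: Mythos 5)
Your proposal follows essentially the same route as the paper: present $\VR_L$ as a quotient of $\VR_K[X_i\mid i\in I]$ via the generators $\tilde Q_i(\eta)=(\eta-c_i)/a_i$, identify the relation ideal as generated by the affine relations $b_{ji}X_j-X_i-\frac{c_i-c_j}{a_i}$ together with the normalized minimal-polynomial relations $g_i=g(a_iX_i+c_i)/d_i$, and then check that $d\tilde Q_i\mapsto 1/a_i$ induces a well-defined surjection onto $I_\alpha/I_\beta$ whose injectivity comes from matching the images of the relation differentials with $I_\beta$ (the paper invokes the second fundamental exact sequence from Matsumura to pass from the relation ideal to a presentation of $\Omega$). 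The step you single out as the main obstacle --- verifying that the listed relations generate the entire relation ideal, so that products of the generators introduce nothing new --- is precisely the point the paper asserts with only brief justification, so your plan is aligned with the actual proof.
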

\begin{proof}
It follows from Proposition \ref{proposgenepure} that $\{\eta-c_i\}_{i\in I}$ is a complete sequence of key polynomials for $(L|K,v)$. For each $i\in I$ we have
\[
\alpha_i=\nu(Q_i')-\nu(Q_i)=v1-v(\eta-c_i).
\]
Consequently,
\begin{equation}\label{formulaalpha}
\alpha=-v(\eta-K).
\end{equation}
For every $i\in I$ take $a_i,d_i\in K$ such that
\[
v(a_i)=v(\eta-c_i)\mbox{ and }v(d_i)=\nu_i(g).
\]
We may assume that $v\eta=0$ and $Q_0(\eta)=\eta$. Set $\tilde Q_i=\frac{x-c_i}{a_i}$. Take a set
\[
\textbf X=\{X_i\mid i\in I\}
\]
of independent variables and consider the homomorphism
\[
\VR_K[X_i\mid i\in I]\lra \VR_L
\]
defined by $\textbf{X}\mapsto\tilde{\textbf{Q}}(\eta)$. By Corollary \ref{corfingenera}, this homomorphism is surjective. Denote by $\mathcal I$ its kernel and take a set of generators $\{f_k\}_{k\in J}$ of $\mathcal I$.  We can identify
\[
\Omega=\frac{\bigoplus\limits_{i\in I}\VR_LdX_i}{\mathcal J}
\]
where
\begin{equation}
\mathcal J=(df_k\mid k\in J)\label{eq:generatorsofJ}
\end{equation}
(see \cite{Mat}, Chapter 10, Section 26, (26.E), Example 1 on pp. 183--184 and (26.I), Theorem 58 on pp. 187--188 --- the second fundamental exact sequence --- where we take $A=\VR_K[\bf X]$, $B=\VR_L$ and $\mathfrak m=\mathcal I$).

For  $i,j\in I$, the $K$-relations between $\tilde{Q}_j$ and $\tilde{Q_i}$ are generated by
\[
\tilde{Q}_j=\frac{a_i}{a_j}\tilde{Q}_i+\frac{c_i-c_j}{a_j}.
\]
Hence, all the $\VR_K$-relations between them are generated by
\begin{equation}
b_{ji}\tilde{Q}_j=\tilde{Q}_i+\frac{c_i-c_j}{a_i}.\label{eq:OKrelatgen}
\end{equation}
where $b_{ji}=a_j/a_i$ and $j>i$. All of these relations are generated (over $K$, but not over
$\VR_K$) by
\begin{equation}\label{relatgen}
a_i\tilde{Q_i}=\tilde{Q}_0+c_i\mbox{ for }i\in I.
\end{equation}
Since $v(a_i)=\nu(x-c_i)$ and the $(x-c_i)$-expansion of $g$ is
\[
g(x)=(x-c_i)^n+\ldots+g'(c_i)(x-c_i)+g(c_i)
\]
we see that
\[
d_i=\nu_i(g)=\min\{v(a_i^n),\ldots, v(a_ig'(c_i)),v(g(c_i))\}.
\]
This gives the $K$-relation 
\[
0=g(\eta)=a_i^n\tilde{Q}_i(\eta)^n+\ldots+a_ig'(c_i)\tilde{Q}_i(\eta)+g(c_i).
\]
Hence, the ideal $\mathcal I$ is generated by the elements
\begin{equation}
b_{ji}X_j-X_i-\frac{c_i-c_j}{a_i},j>i.\label{eq:OKrelatgenX}
\end{equation}
and the $\VR_K$-relations of the form
\[
\frac{a_i^n}{d_i} X_i^n+\ldots+\frac{a_ig'(c_i)}{d_i} X_i+\frac{g(c_i)}{d_i}=\frac{g(a_iX_i+c_i)}{d_i}.
\]
Let $g_i:=\frac{g(a_iX_i+c_i)}{d_i}\in\VR_K[X_i]\subset\VR_K[\bf X]$. Then the module $\mathcal J\subset\bigoplus\limits_{i\in I}\VR_LdX_i$ is generated by the elements of the form
\begin{equation}\label{reqrelationomega}
b_{ji}dX_{ j}-dX_{ i},j>i\mbox{ for }i,j\in I
\end{equation}
and
\begin{equation}
\frac{\partial g_i}{\partial X_i}dX_i\mbox{ for }i\in I.\label{reqrelationomega2}
\end{equation}
\begin{Obs} In $\bigoplus\limits_{i\in I}LdX_i$, the element $\frac{\partial g_i}{\partial X_i}dX_i$ is congruent to
$\frac{g'(\eta)}{d_i}dX_0$ modulo the relations \eqref{reqrelationomega}.
\end{Obs}
Consider the map
\begin{displaymath}
\begin{array}{ccccc}
\Psi&:&\Omega&\lra &I_\alpha/I_\beta\\
&&\displaystyle\sum_{l=1}^s b_ldX_{i_l}+\mathcal J&\longmapsto&\displaystyle\sum_{l=1}^s \frac{b_l}{a_{i_l}}+I_\beta
\end{array}.
\end{displaymath}
This can be seen as the map that sends $dX_0+\mathcal J$ to $1+I_\beta$ and is extended to $\Omega$ by the relations $a_idX_i=dX_0$.  The fact that the relations in \eqref{reqrelationomega} and \eqref{reqrelationomega2} generate $\mathcal J$ guarantees that $\Psi$ is well-defined and injective:
\[
\sum_{l=1}^s b_ldX_{i_l}\in \mathcal J
\]
if and only if
\[
\sum_{l=1}^s \frac{b_l}{a_{i_l}}=a\frac{g'(\eta)}{d_i}\mbox{ for some }a\in \VR_L\mbox{ and }i\in I,
\]
if and only if
\[
\sum_{l=1}^s \frac{b_l}{a_{i_l}}\in I_\beta.
\]
Since $\Psi$ is defined on the generators of $\Omega$ and extended by $\VR_L$-linearity, it is automatically a homomorphism of
$\VR_L$-modules. The surjectivity of $\Psi$ follows by construction: if $b\in I_\alpha$, then there exists $i\in I$ such that
$vb\geq -v(a_i)$. Hence
\[
\Psi\left(ba_idX_i+\mathcal J\right)=\frac{ba_i}{a_i}+I_\beta=b+I_\beta.
\]
This concludes the proof of Theorem \ref{Trheconjecaseoure}.
\end{proof}
\subsection{Computation of $\Omega$ for pure defect and branched pure extensions}\label{sectionimportdeffe}
Assume that $(L|K,v)$ is a pure defect extension in $\eta$ and consider a sequence $\textbf Q$ as before. For $i$ large enough, it follows from \cite[Proposition 4.1]{Netal} that $\nu_i(g)=n\nu(Q_i)$. Hence, for $i$ large enough, we have
\[
\beta_i=v(g'(\eta))-\nu_i(g)=v(g'(\eta))-n\cdot v(\eta-c_i).
\]
Consequently,
\begin{equation}\label{formulabeta}
\beta=v(g'(\eta))-n\cdot v(\eta-K).
\end{equation}
Combining this with \eqref{formulaalpha}, we can say precisely what the annihilator of $\Omega$ is:
\begin{equation}\label{formulaannilhi}
{\rm ann}(\Omega)=\left\{b\in \VR_L\ \left|\ v\left(\frac{b}{g'(\eta)}\right)-v(\eta-K)\subseteq -n\cdot v(\eta-K)\right.\right\}. 
\end{equation}
For instance, if $\mbox{rk}\ v=1$, then we may assume that $vL\subseteq \R$ and take
\[
\rho:=\sup v(\eta-K)\in \R.
\]
Then, ${\rm ann}(\Omega)$ can be written as
\[
{\rm ann}(\Omega)=\left\{b\in \VR_L\mid vb\geq (1-n)\rho+v(g'(\eta))\right\}. 
\]

In the case of a branched pure extension, we obtain a slightly different formula for $\beta$ (for $\alpha$ we have the same formula \eqref{formulaalpha}). Namely, by the defect formula (\cite[Theorem 6.14]{NN20221}) we deduce that for large enough $i\in I$ we have
\[
\nu_i(g)=\nu\left(\partial_dg(a_i)(x- c_i)^d\right)=\nu(\partial_dg(a_i))+dv(\eta-c_i),
\]
where $d:=d(L|K,v)$. If $\beta_d$ is the fixed value of $\partial_dg$, then
\begin{equation}\label{purelebranchedequa}
\beta=v(g'(\eta))-\beta_d-d\cdot v(\eta-K).
\end{equation}
\begin{Obs} In the case of a pure defect extension, we have $n=d(L|K,v)=d$, $\partial_dg=1$ (as $g$ is monic) and
$\beta_d=0$. Thus formula \eqref{formulabeta} can be viewed as a special case of \eqref{purelebranchedequa} when the number of distinct extensions of $v|_K$ to $L$ is equal to one.
\end{Obs}

\subsection{About \cite[Proposition 4.1]{CKR}}
In \cite{CKR} an alternative characterization of $\Omega$ is presented. We briefly compare this approach with ours.
\begin{Prop}\cite[Proposition 4.1]{CKR}\label{proposckr}
Let $L|K$ be an algebraic field extension and suppose that $A$ is a normal domain with quotient field $K$ and $B$ is a domain with quotient field $L$ such that $A\subset B$ is an integral extension. Suppose that there exist $s_\alpha \in B$, which are indexed by a totally ordered set $S$, such that $A[s_\alpha]\subset A[s_\beta]$ if $\alpha<\beta$ and
\[
\bigcup_{\alpha\in S} A[s_\alpha]=B.
\]
Further, suppose that there exist $r_\alpha,r_\beta\in A$ such that $r_\beta\mid r_\alpha$ if $\alpha\leq \beta$ and for $\alpha\leq \beta$, there exist $c_{\alpha,\beta}\in A$ and expressions
\[
s_\alpha=\frac{r_\alpha}{r_\beta}s_\beta+c_{\alpha,\beta}.
\]
Let $h_\alpha$ be the minimal polynomial of $s_\alpha$ over $K$. Let $U$ and $V$ be the $B$-ideals
\[
U = (r_\alpha \mid \alpha\in S)\mbox{ and }V=(h_\alpha'(s_\alpha)\mid \alpha \in S).
\]
Then we have a $B$-module isomorphism
\[
\Omega_{B|A}\simeq U/UV.
\]
\end{Prop}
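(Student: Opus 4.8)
The plan is to build the isomorphism $\Omega_{B|A}\simeq U/UV$ by the same second fundamental exact sequence strategy used in the proof of Theorem \ref{Trheconjecaseoure}, but adapted to the abstract hypotheses of this statement. First I would introduce a polynomial ring $A[X_\alpha\mid \alpha\in S]$ in independent variables indexed by $S$, together with the surjection $\pi\colon A[X_\alpha\mid\alpha\in S]\lra B$ sending $X_\alpha\mapsto s_\alpha$; surjectivity is exactly the hypothesis $\bigcup_\alpha A[s_\alpha]=B$. Writing $\mathcal I=\ker\pi$, the second fundamental exact sequence (as invoked from \cite{Mat} in the earlier proof, with $A$ there replaced by $A[X_\alpha]$, $B$ by $B$, and $\mathfrak m$ by $\mathcal I$) identifies
\[
\Omega_{B|A}=\frac{\bigoplus_{\alpha\in S}B\,dX_\alpha}{\mathcal J},\qquad \mathcal J=(df\mid f\in\mathcal I).
\]
The first real task is to pin down generators of $\mathcal I$, and hence of $\mathcal J$.

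Next I would argue that $\mathcal I$ is generated by two families. The first family comes from the compatibility relations $s_\alpha=\frac{r_\alpha}{r_\beta}s_\beta+c_{\alpha,\beta}$: clearing denominators using $r_\beta\mid r_\alpha$ gives, for $\alpha\le\beta$, the polynomial relation $r_\beta X_\alpha-r_\alpha X_\beta-r_\beta c_{\alpha,\beta}\in\mathcal I$ (the analogue of \eqref{eq:OKrelatgenX}). The second family comes from the minimal polynomials: $h_\alpha(X_\alpha)\in\mathcal I$ since $h_\alpha(s_\alpha)=0$. Differentiating, the first family contributes $r_\beta\,dX_\alpha-r_\alpha\,dX_\beta$ to $\mathcal J$ (the analogue of \eqref{reqrelationomega}), which lets one express every $dX_\alpha$ in terms of a single fixed generator up to the scalars $r_\alpha/r_\beta$; the second family contributes $h_\alpha'(X_\alpha)\,dX_\alpha$ (the analogue of \eqref{reqrelationomega2}). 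Modulo the first family these $d$-relations read $r_\alpha\,dX_\alpha=r_\beta\,dX_\beta$, so after passing to the direct limit along $S$ the module $\bigoplus_\alpha B\,dX_\alpha$ modulo the first family is the free rank-one $B$-module on the ``common'' differential, with $dX_\alpha$ corresponding to multiplication by $r_\alpha$ (this mirrors the remark that $\frac{\partial g_i}{\partial X_i}dX_i\equiv \frac{g'(\eta)}{d_i}dX_0$).

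With generators of $\mathcal J$ in hand, I would define the map $\Omega_{B|A}\lra U/UV$ by sending $dX_\alpha\mapsto r_\alpha\,h_\alpha'(s_\alpha)+UV$ (equivalently, $\sum_l b_l\,dX_{\alpha_l}\mapsto \sum_l b_l r_{\alpha_l} h_{\alpha_l}'(s_{\alpha_l})+UV$), paralleling the map $\Psi$ of Theorem \ref{Trheconjecaseoure}. Well-definedness amounts to checking that the two generating families of $\mathcal J$ map into $UV$: the first family maps to $r_\alpha r_\beta$-multiples that land in $U\cdot V$ by compatibility of the $r$'s and the $h'$'s, and the second maps to $r_\alpha h_\alpha'(s_\alpha)^2\in UV$ by definition of $U$ and $V$. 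Surjectivity is immediate since the images $r_\alpha h_\alpha'(s_\alpha)$ generate $UV$ together with the $B$-action. The main obstacle, and the step deserving the most care, is injectivity: one must verify that an element $\sum_l b_l\,dX_{\alpha_l}$ lying in the kernel of the composite map already lies in $\mathcal J$, i.e. that the relations $r_\beta\,dX_\alpha-r_\alpha\,dX_\beta$ and $h_\alpha'(s_\alpha)\,dX_\alpha$ are the only relations. Concretely this requires showing that $\sum_l b_l r_{\alpha_l}h_{\alpha_l}'(s_{\alpha_l})\in UV$ forces $\sum_l b_l\,dX_{\alpha_l}\in\mathcal J$, which is the exact analogue of the ``if and only if'' chain establishing injectivity of $\Psi$. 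The delicate point is that $S$ need not be countable or have a least element, so the reduction to a single generator must be done via a colimit argument rather than by choosing a minimal index, and one must ensure that the interplay between the divisibility $r_\beta\mid r_\alpha$ and the compatibility of the $h_\alpha'$ is used correctly to keep the image inside $UV$ and no larger.
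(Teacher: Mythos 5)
First, a point of comparison: the paper does not prove this proposition at all --- it is quoted verbatim from \cite[Proposition 4.1]{CKR} and then only \emph{applied} (the text that follows identifies $r_\alpha$, $s_\alpha$, $c_{\alpha,\beta}$ in the language of key polynomials). So there is no in-paper proof to measure you against; that said, your overall skeleton --- present $B$ as $A[X_\alpha\mid\alpha\in S]/\mathcal I$, invoke the second fundamental exact sequence, identify the generators of $\mathcal I$ coming from the transition relations and from the minimal polynomials, and map the resulting quotient onto $U/UV$ --- is exactly the natural transplant of the proof of Theorem \ref{Trheconjecaseoure} and is the right strategy.

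There is, however, a concrete error that kills the argument as written: the map you define is identically zero. You send $dX_\alpha\mapsto r_\alpha h_\alpha'(s_\alpha)+UV$, but $r_\alpha\in U$ and $h_\alpha'(s_\alpha)\in V$, so $r_\alpha h_\alpha'(s_\alpha)\in UV$ for every $\alpha$; hence every generator of $\Omega_{B|A}$ is sent to $0$ in $U/UV$. Your surjectivity argument fails for the same reason: the images must generate $U/UV$, not $UV$. The correct map sends $dX_\alpha\mapsto r_\alpha+UV$ (equivalently, it is induced by $ds_\alpha\mapsto r_\alpha$); then the transition relations $dX_\alpha-\frac{r_\alpha}{r_\beta}dX_\beta$ map to $r_\alpha-\frac{r_\alpha}{r_\beta}\,r_\beta=0$, the relations $h_\alpha'(X_\alpha)\,dX_\alpha$ map to $h_\alpha'(s_\alpha)\,r_\alpha\in UV$, and surjectivity is clear because the $r_\alpha$ generate $U$. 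Note also that since $r_\beta\mid r_\alpha$ in $A$, the transition relation is $X_\alpha-\frac{r_\alpha}{r_\beta}X_\beta-c_{\alpha,\beta}$, which is monic in $X_\alpha$ --- there are no denominators to clear, and replacing it by the weaker relation $r_\beta\,dX_\alpha-r_\alpha\,dX_\beta$ would enlarge the quotient. Finally, the genuinely hard step --- showing that these two families actually generate $\mathcal I$, which is where the normality of $A$ enters (to guarantee $h_\alpha\in A[x]$) and where injectivity is decided --- is only flagged as ``delicate'' but not carried out, so even with the corrected map the proposal remains a plan rather than a proof.
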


We now explain how to describe the elements $r_\alpha$, $s_\alpha$ and $c_{\alpha\beta}$ of Proposition \ref{proposckr} using our terminology and notation.
\begin{Obs}
It is well-known that for an algebraic extension $(L|K,v)$ the valuation ring $\VR_K$ is normal and its integral closure in $L$ is the intersection of all the valuation rings of $L$ dominating $\VR_K$. Hence, for the extension $\VR_L|\VR_K$ to be integral, it is necessary for $(L|K,v)$ to be unibranched. 
\end{Obs}

Assume that $(L|K,v)$ is a pure defect extension and consider the sequence of key polynomials $\{\eta-c_i\}$ and elements $a_i$ as before. Take $\tilde r\in \VR_L$ such that
\[
v\tilde r>v(\eta-K).
\]
For each $i\in I$, write
\[
r_i=\tilde r a_i^{-1}\mbox{ and }s_i=\frac{\eta-c_i}{a_i}.
\]
If $i<j$, then
\[
s_i=\frac{(\eta-c_i)}{a_i}=\frac{r_i}{r_j}\cdot s_j+\frac{(c_j-c_i)}{a_i}\in \VR_K[s_j].
\]
Since $v(a_i)=v(\eta-c_i)<v(\eta-c_j)=v(a_j)$ we deduce that $\tilde ra_i^{-1}=r_i$ is a multiple of $r_j=\tilde ra_j^{-1}$ by an element of $\VR_L$. In particular, the hypotheses of \cite[Proposition 4.1]{CKR} are satisfied and we deduce that
\[
\Omega\simeq U/UV.
\]
Let us compute $U$ and $V$ and compare with our results above. For each $i\in I$, we have
\[
v(r_i)=v(\tilde ra_i^{-1})=v\tilde r-v(a_i)
\]
and consequently
\begin{equation}
U=(r_i\mid i\in I)=\{b\in \VR_L\mid vb>v\tilde r-v(a_i)\mbox{ for some }i\in I\}= I_{\alpha+v\tilde r}.
\end{equation}

Let us calculate $V$. For each $i$, we have $\eta=(\eta-c_i)+c_i$. Hence
\begin{equation}
0=g(\eta)=g((\eta-c_i)+c_i)=g(c_i)+g'(c_i)(\eta-c_i)+\ldots+(\eta-c_i)^n.\label{eq:minpolynQi}
\end{equation}
\begin{Obs}\label{dividebyain} Since $\deg\ g'<\deg\ g$, for $i$ sufficiently large the sequence
\[
\{v\left(g'(c_i)\right)\}_{i\in I}
\]
stabilizes and its stable value is equal to $v(g'(\eta))$. Of course, this includes the case $g'=0$.
\end{Obs}
Since $s_i=\frac{\eta-c_i}{a_i}$, \eqref{eq:minpolynQi} implies that
\[
0=\frac{g(c_i)}{a_i^n}+\frac{g'(c_i)}{a_i^{n-1}}s_i+\ldots+s_i^n.
\]
Write
\begin{equation}
h_i(x)=\frac{g(a_ix+c_i)}{a_i^n}=\frac{g(c_i)}{a_i^n}+\frac{g'(c_i)}{a_i^{n-1}}x+\ldots+x^n\in K[x].\label{eq:hiintermsofg}
\end{equation}
Since $g(x)$ is irreducible over $K$, so is $h_i(x)$. Thus $h_i(x)$ is the minimal polynomial of $s_i$. 

In view of the expression \eqref{eq:hiintermsofg} for $h_i$ in terms of $g$ and the chain rule for differentiation, we have, for all $i$,
\[
h_i'(x)=\frac{g'(a_ix+c_i)}{a_i^{n-1}},
\]
so
\[
h_i'(s_i)=\frac{g'(a_is_i+c_i)}{a_i^{n-1}}=\frac{g'(\eta)}{a_i^{n-1}}.
\]

Therefore,
\begin{displaymath}
\begin{array}{rcl}
V&=&\displaystyle\left(v\left(\frac{g'(\eta)}{a_i^{n-1}}\right)\ \left| \right.\ i\in I\right)\\[8pt]
&=&\{b\in \VR_L\mid \ \exists i\in I\mbox{ such that }v(b)\geq v(g'(\eta))+(1-n)va_i\}\\[8pt]
&=&\displaystyle I_{v(g'(\eta))+(1-n)v(\eta-K)}.

\end{array}
\end{displaymath}
\section{Minimal key polynomials}\label{minkeypolyno}
Let $\nu$ be a valuation of $K[x]$ and for $m\in\N$ consider the set
\[
\Psi_m=\{Q\in K[x]\mid \deg(Q)=m\mbox{ and }Q\mbox{ is a key polynomials for }\nu\}. 
\]
If $\Psi_m\neq \emptyset$ and $F$ is a key polynomial of smallest degree strictly larger than $m$ (we denote $\deg\ F$ by
$m_+$, so $F\in\Psi_{m_+}$), then we say that $F$ is a \textbf{key polynomial for $\Psi_m$}. If $\nu(\Psi_m)$ does not have a maximum, then every key polynomial for $\Psi_m$ will be called a \textbf{limit key polynomial for $\Psi_m$}.

For two polynomials $f,q\in K[x]$, $\deg(q)>0$, denote by
\[
f=f_{r,q}q^r+\ldots+f_{0,q}
\]
the $q$-expansion of $f$.

For each $m\in \N$ such that $\Psi_m\neq \emptyset$ we will choose a subset $\textbf{Q}_m$ as follows. If $\nu(\Psi_m)$ has a maximum, then set $\textbf{Q}_m=\{Q_m\}$ where $Q_m\in \Psi_m$ has this maximum value. If $\nu(\Psi_m)$ does not have a maximum, we choose $\textbf{Q}_m$ to be any well-ordered cofinal subset of $\Psi_m$.

\begin{Def}
A key polynomial $F$ for $\Psi_m$ is said to be \textbf{minimal} if there exists $Q\in\textbf{Q}_m$ and a subset $B_m$ of $\N$ such that for every $R\in \textbf{Q}_m$, with
\[
\nu(R)\geq \nu(Q),
\]
the polynomial
\[
F_{0,R}+\sum_{\ell\in B_m}F_{\ell,R}R^\ell
\]
is a key polynomial for $\Psi_m$ but for every $s\in B_m$ there exists a cofinal subset of $\textbf{Q}_m$ such for every $R$ in it, the polynomial
\[
F_{0,R}+\sum_{\ell\in B_m\setminus\{s\}}F_{\ell,R}R^\ell
\] 
is not a key polynomial for $\Psi_m$.
\end{Def}
One can show that the set $B_m$ above does not depend on $F$ or $Q\in\Psi_m$.
\begin{Obs}
In the case when $\Psi_m$ contains a maximal element, the above definition becomes much simpler. Since the only possibility for $Q$ is the unique element of $\mathbf Q_m$, the polynomial $F$ is a minimal key polynomial for $\Psi_m$ if
\[
F_{0,Q}+\sum_{\ell\in B_m}F_{\ell,Q}Q^\ell
\]
is a key polynomial for $\Psi_m$ but for every $s\in B_m$ the polynomial
\[
F_{0,Q}+\sum_{\ell\in B_m\setminus\{s\}}F_{\ell,Q}Q^\ell
\] 
is not a key polynomial for $\Psi_m$.
\end{Obs}

For a pure extension $(L|K,v)$ of degree $n$ and a generator $\eta$ for it, consider the valuation $\nu$ on $K[x]$  defined by $v$ and $\eta$. The only natural numbers $m$ for which the corresponding set $\Psi_m$ is non-empty are $m=1$ and $m=n$. Hence, the only case for which it makes sense to talk about minimal key polynomials is for the set $\Psi_1$. Because of this, for the remaining of this section we will denote $B_1$ simply by $B$.
\subsection{The purely inertial case}
Assume that the extension $(L|K,v)$ is purely inertial. Choosing $\eta$ as in Proposition \ref{proposgenepure} we deduce that $0=v\eta$ is the maximum of $v(\eta-K)$ and $\eta v$ generates $Lv|Kv$.

The following lemma follows immediately from Proposition \ref{Corfinitelgenalg}.
\begin{Lema}
We have $\VR_L=\VR_K[\eta]$.
\end{Lema}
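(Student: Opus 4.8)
The plan is to deduce the equality directly from Proposition \ref{Corfinitelgenalg}, which expresses $\VR_L$ as a $\VR_K$-algebra generated by a $K$-proportional modification of any complete set of polynomials for $\nu$, provided the extension is unramified. Thus the first task is to check that the hypothesis $e(L|K,v)=1$ of that proposition holds in the purely inertial case. This follows from the fundamental inequality: since $[L:K]\geq e(L|K,v)\cdot f(L|K,v)$ and, by the definition of purely inertial, $[L:K]=[Lv:Kv]=f(L|K,v)$, we are forced to have $e(L|K,v)=1$ (and, incidentally, trivial defect). Hence $vL=vK$ and Proposition \ref{Corfinitelgenalg} applies.

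Next I would identify the relevant complete set explicitly. By Proposition \ref{proposgenepure}, with $\eta$ chosen so that $\eta\in\VR_L$ and $\eta v$ generates $Lv|Kv$, the set $v(\eta-K)$ is a complete set of key polynomials for $(L|K,v)$, and, as recorded in the preceding discussion, $0=v\eta$ is the maximum of $v(\eta-K)$. Because this maximum is attained (at $c=0$), the plateau of linear key polynomials collapses to a single element and we may take the complete set to be $\{x\}\cup\{g\}$, where $g$ is the minimal polynomial of $\eta$; this is exactly the one-element choice of $\textbf{Q}$ noted in the Remark preceding Theorem \ref{Trheconjecaseoure}.

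Finally I would run Proposition \ref{Corfinitelgenalg} on this complete set. It yields a $K$-proportional complete set $\tilde{\textbf{Q}}$ with $\VR_L=\VR_K[\tilde{\textbf{Q}}(\eta)]$, where the proportionality replaces $x$ by $ax$ for some $a\in K\setminus\{0\}$ normalized so that $\nu(ax)=0$. Since $\nu(x)=v\eta=0$, this forces $v(a)=0$, that is $a\in\VR_K$ is a unit. As $g(\eta)=0$, the only genuine generator produced is $a\eta$, so $\VR_L=\VR_K[a\eta]=\VR_K[\eta]$, the last equality holding because $a$ is a unit of $\VR_K$. I expect no serious obstacle here: the only points requiring care are the verification that $e(L|K,v)=1$ and the observation that the normalizing factor $a$ is a unit, both of which are immediate. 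This is why the statement is recorded as following immediately from Proposition \ref{Corfinitelgenalg}.
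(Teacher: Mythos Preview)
Your proposal is correct and follows exactly the approach the paper intends: the paper's proof consists of the single sentence ``follows immediately from Proposition \ref{Corfinitelgenalg}'', and you have supplied precisely the details behind that citation --- the verification that $e(L|K,v)=1$ via the fundamental inequality, the choice of the singleton complete set $\{x\}$ (since $v\eta=0$ is the maximum of $v(\eta-K)$), and the observation that the normalizing constant is a unit so that $\VR_K[a\eta]=\VR_K[\eta]$.
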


\begin{Prop}\label{porpcaseinertial}
If $(L|K,v)$ is purely inertial, then
\[
\Omega \simeq \VR_L/(g'(\eta)).
\]
\end{Prop}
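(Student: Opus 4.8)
The plan is to bypass the general machinery of $I_\alpha/I_\beta$ and compute $\Omega$ directly from the presentation of $\VR_L$ supplied by the preceding lemma. By that lemma $\VR_L=\VR_K[\eta]$, so $\eta$ is integral over $\VR_K$; since a valuation ring is integrally closed in its fraction field, the monic irreducible minimal polynomial $g$ of $\eta$ over $K$ already has coefficients in $\VR_K$. First I would verify that the natural surjection $\VR_K[x]\lra\VR_L$, $x\mapsto\eta$, has kernel exactly the principal ideal $\mathcal I=(g)$. Indeed, if $h\in\VR_K[x]$ satisfies $h(\eta)=0$, dividing $h$ by the monic $g$ inside $\VR_K[x]$ gives a remainder $r$ with $\deg r<n=[L:K]$, $r\in\VR_K[x]$ and $r(\eta)=0$; minimality of $g$ forces $r=0$, hence $h\in(g)$. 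Combined with surjectivity (from $\VR_L=\VR_K[\eta]$) and the fact that $\VR_K[x]/(g)$ is $\VR_K$-free on the classes of $1,x,\ldots,x^{n-1}$, this yields the algebra isomorphism $\VR_L\cong\VR_K[x]/(g)$ with defining ideal generated by the single element $g$.

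Next I would feed this presentation into the second fundamental exact sequence, which is exactly the tool from Matsumura already invoked in the proof of Theorem \ref{Trheconjecaseoure} (here one takes $A=\VR_K[x]$, $B=\VR_L$ and $\mathfrak m=\mathcal I$):
\[
\mathcal I/\mathcal I^2\lra\Omega_{\VR_K[x]|\VR_K}\otimes_{\VR_K[x]}\VR_L\lra\Omega\lra0.
\]
The middle term $\Omega_{\VR_K[x]|\VR_K}\otimes_{\VR_K[x]}\VR_L=\VR_L\,d\eta$ is free of rank one, and since $\mathcal I=(g)$ the left-hand module is generated over $\VR_L$ by the class of $g$, whose image under the connecting map is $dg=g'(\eta)\,d\eta$. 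Therefore
\[
\Omega\cong\VR_L\,d\eta/\bigl(g'(\eta)\,d\eta\bigr)\cong\VR_L/(g'(\eta)),
\]
the desired isomorphism. I would remark that this argument is uniform: if $g'=0$ it gives $\Omega\cong\VR_L$, consistent with the expectation that $\Omega=(0)$ is equivalent to separability of $Lv|Kv$.

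The step carrying all the content, and the only one I expect to present any difficulty, is the principality of $\mathcal I$, i.e. the assertion that the presentation of $\VR_L$ over $\VR_K$ involves the single relation $g$ and no others; the subsequent homological computation is then routine. This is precisely what distinguishes the purely inertial case (where $\VR_L$ is $\VR_K$-free of rank $n$ on $1,\eta,\ldots,\eta^{n-1}$, a consequence of $vL=vK$ together with $[Lv:Kv]=n$) from the purely ramified case handled separately. As an independent cross-check, one can instead deduce the proposition from Theorem \ref{Trheconjecaseoure} taking $\textbf Q$ to be a one-element set: since $0=v\eta$ is the maximum of $v(\eta-K)$, formula \eqref{formulaalpha} gives $\alpha=\Gamma_{\geq0}$, hence $I_\alpha=\VR_L$, while $\nu_x(g)=0$ (as $g$ is monic with coefficients in $\VR_K$) yields $\beta_0=v(g'(\eta))$ and thus $I_\beta=g'(\eta)\VR_L$; the quotient $I_\alpha/I_\beta$ is then visibly $\VR_L/(g'(\eta))$.
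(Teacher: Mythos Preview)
Your proposal is correct, and in fact you have given two proofs. Your cross-check at the end is precisely the paper's own argument: the paper applies Theorem~\ref{Trheconjecaseoure} with the one-element sequence $\textbf{Q}=\{x\}$, reads off $\alpha_0=v1-v\eta=0$ and $\beta_0=v(g'(\eta))-\nu_x(g)=v(g'(\eta))$, and concludes $I_\alpha/I_\beta\cong\VR_L/(g'(\eta))$. Your computation of $\nu_x(g)=0$ from monicity of $g$ and $v\eta=0$ is exactly the missing detail behind the paper's one-line calculation.

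Your primary approach, by contrast, is genuinely different and more elementary: rather than invoking the general machinery of Theorem~\ref{Trheconjecaseoure} (whose proof already uses the second fundamental exact sequence in a multi-variable setting), you exploit the fact that in the purely inertial case the algebra is monogenic with a single defining relation, $\VR_L\cong\VR_K[x]/(g)$, and apply the conormal sequence directly in one variable. This is the textbook computation of K\"ahler differentials for a quotient of a polynomial ring, and it makes the proposition essentially self-contained (modulo the preceding lemma $\VR_L=\VR_K[\eta]$). The only substantive point, as you correctly identify, is that the kernel of $\VR_K[x]\twoheadrightarrow\VR_L$ is principal, generated by $g$; your argument via Euclidean division by the monic $g$ inside $\VR_K[x]$ handles this cleanly. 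What the paper's route buys is uniformity with the pure-defect and branched-pure cases, where no such finite presentation is available; what your direct route buys is a proof that does not depend on the correctness of the more delicate Theorem~\ref{Trheconjecaseoure}.
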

\begin{proof}
In this case we can apply Theorem \ref{Trheconjecaseoure} for  $\textbf{Q}=\{x\}$. Then we have
\[
\alpha=v1-v\eta=0\mbox{ and }\beta=v(g'(\eta))-\nu_x(g)=v(g'(\eta)).
\]
Hence, the result follows.
\end{proof}

Let
\[
\overline Q(y)=b_0v+b_1vy+\ldots+y^n\in Kv[y]
\]
be the minimal polynomial of $\eta v$ over $Kv$. Then 
\[
Q=b_0+b_1 x+\ldots+ x^n=b_0+\sum_{k\in B}b_kx^k\in K[x]
\]
is a minimal key polynomial for $\textbf{Q}_1$. Also, we deduce that
\[
\nu(g-Q)= \nu_x(g-Q)>\nu_x(g)=0.
\]

\begin{Cor}\label{corlopinertial}
If $(L|K,v)$ is purely inertial, then
\[
\Omega=(0) \Llr Lv|Kv\mbox{ is separable}\Llr p\nmid B.
\]
\end{Cor}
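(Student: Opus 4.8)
The plan is to establish the two equivalences in the statement
\[
\Omega=(0) \Llr Lv|Kv\mbox{ is separable}\Llr p\nmid B
\]
separately, using Proposition \ref{porpcaseinertial} for the first and the explicit description of $Q$ and $B$ for the second. The first equivalence is essentially algebraic and should follow from the computation $\Omega\simeq \VR_L/(g'(\eta))$ of Proposition \ref{porpcaseinertial}. Indeed, $\Omega=(0)$ if and only if $g'(\eta)$ is a unit in $\VR_L$, which happens exactly when $v(g'(\eta))=0$. So the first task is to show that $v(g'(\eta))=0$ if and only if $\eta v$ is separable over $Kv$, which in turn (since $\eta v$ generates $Lv|Kv$) is equivalent to the separability of $Lv|Kv$.

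For this I would pass to residues. Since $\nu(g-Q)>0$ and both $g$ and $Q$ reduce modulo the maximal ideal to polynomials over $Kv$, the reduction of $g$ coincides with $\overline Q(y)$, the minimal polynomial of $\eta v$. The crucial point is that reduction commutes with formal differentiation, so the residue of $g'(\eta)$ is $\overline Q{}'(\eta v)$. Therefore $v(g'(\eta))=0$ precisely when $\overline Q{}'(\eta v)\neq 0$ in $Lv$, i.e. when $\eta v$ is not a root of the derivative of its own minimal polynomial --- which is exactly the condition that $\eta v$ be separable over $Kv$. The one subtlety to check here is that $v(g'(\eta))\geq 0$ always (so that the residue is defined) and that the reduction of $g$ is genuinely the irreducible polynomial $\overline Q$ rather than some proper factor or power of it; this should follow from $\deg\overline Q=n=[Lv:Kv]$ together with $\nu_x=\nu$ on polynomials of degree $<n$.

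For the second equivalence, $Lv|Kv$ separable $\Llr p\nmid B$, I would argue directly from the shape of $\overline Q$. By the description preceding the corollary, $B$ is the set of exponents $k$ appearing with nonzero coefficient in $\overline Q(y)=\sum_{k\in B}b_kvy^k+y^n$ (together with the leading term), so $\overline Q{}'(y)=\sum_{k\in B}k\,b_kvy^{k-1}$. In characteristic $p$, a monic polynomial has a nonzero derivative if and only if not all of its exponents are divisible by $p$, equivalently $p\nmid B$ in the sense that some element of $B$ is coprime to $p$. Since $\overline Q$ is irreducible, $\eta v$ is separable if and only if $\overline Q{}'\neq 0$, which by the preceding is exactly $p\nmid B$. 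The main obstacle, and the step deserving the most care, is the reduction argument in the previous paragraph: one must verify cleanly that passing to residues sends $g'(\eta)$ to $\overline Q{}'(\eta v)$ and that the residue of $g$ really is the separable-or-not minimal polynomial $\overline Q$, so that the valuation-theoretic statement $v(g'(\eta))=0$ translates faithfully into the field-theoretic separability condition.
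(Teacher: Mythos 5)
Your proposal follows the paper's own argument: both reduce to $\Omega\simeq\VR_L/(g'(\eta))$ via Proposition \ref{porpcaseinertial}, identify the vanishing of $v(g'(\eta))$ with the nonvanishing of $\frac{d\overline Q}{dy}$ (using $\nu(g-Q)>0$ to see that $g$ reduces to $\overline Q$), and read off $p\nmid B$ from the exponents of $\overline Q$. You merely spell out the reduction-commutes-with-differentiation step that the paper leaves implicit, so the two proofs are essentially identical.
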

\begin{proof}
By Proposition \ref{porpcaseinertial} we deduce that $\Omega=(0)$ if and only if $v(g'(\eta))=0$. This happens if and only if
$\frac{d\overline{Q}}{dy}\neq 0$. This happens if and only if $\eta v$ is separable over $Kv$. Since
\[
\overline Q(y)=b_0v+b_1vy+\ldots+y^n=b_0v+\sum_{k\in B}b_kv y^k
\]
we deduce that
\[
\frac{d\overline{Q}}{dy}\neq 0\Llr p\nmid B.
\]
\end{proof}
\subsection{A description of $\Omega$ for pure defect and branched pure extensions}
\begin{Prop}\label{Propbunitinhamans}
Assume that $(L/K,v)$ is a pure defect or a branched pure extensions. Then
\[
\Omega=(0)\Llr 1\in B.
\] 
\end{Prop}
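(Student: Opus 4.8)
The plan is to reduce the statement to one valuation-theoretic equality and then to recognise that equality as the combinatorial condition $1\in B$.

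First I would apply Theorem~\ref{Trheconjecaseoure}, which gives $\Omega\cong I_\alpha/I_\beta$. Since $I_\beta\subseteq I_\alpha$, we have $\Omega=(0)$ if and only if $I_\alpha=I_\beta$, that is, if and only if the final segments $\alpha$ and $\beta$ determine the same $\VR_L$-submodule of $L$. Write $\gamma_i=v(\eta-c_i)$ for the cofinal family defining $v(\eta-K)$ and let $\rho$ be the cut of $\Gamma$ determined by $v(\eta-K)$ (so $\rho=\sup v(\eta-K)$ when $\mathrm{rk}\,v=1$). By \eqref{formulaalpha} we have $\alpha=-v(\eta-K)$, and by \eqref{formulabeta} (pure defect) and \eqref{purelebranchedequa} (branched pure) the value $\nu_i(g)$ stabilises to $\mathrm{ht}+m\gamma_i$, where $m=n$, $\mathrm{ht}=0$ in the pure defect case and $m=d=d(L|K,v)$, $\mathrm{ht}=\beta_d$ in the branched pure case. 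Comparing the coinitial cuts of the decreasing families $\{-\gamma_i\}$ and $\{v(g'(\eta))-\nu_i(g)\}$, the equality $\alpha=\beta$ reduces to the single equality
\[
v(g'(\eta))=\mathrm{ht}+(m-1)\rho .
\]

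Next I would record the general lower bound $v(g'(\eta))\ge \mathrm{ht}+(m-1)\rho$, obtained by expanding
\[
g'(\eta)=\sum_{k\ge 1}k\,\partial_k g(c_i)\,(\eta-c_i)^{k-1}
\]
and estimating each term by means of $v(\partial_k g(c_i))+k\gamma_i\ge\nu_i(g)=\mathrm{ht}+m\gamma_i$; here the fixed values attached to the pure extension make each $v(\partial_k g(c_i))$ stabilise for $k\ge 1$ (for $k=1$ this is Remark~\ref{dividebyain}). Thus $\Omega=(0)$ is equivalent to this lower bound being attained, which geometrically says that the point $(1,v(g'(\eta)))$ lies on the edge of slope $-\rho$ through the vertex $(m,\mathrm{ht})$ of the Newton polygon of $g$.

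Finally I would identify this with $1\in B$. Because $\partial_1 g=g'$, the height of the Newton polygon of $g$ at the exponent $1$ is exactly $v(g'(\eta))$, so the exponent $1$ lies on the critical edge precisely when $v(g'(\eta))=\mathrm{ht}+(m-1)\rho$. Unwinding the definition of minimal key polynomial, $1\in B$ means exactly that the linear summand $g'(c_i)(x-c_i)$ is essential in the truncation $g(c_i)+\sum_{\ell\in B}\partial_\ell g(c_i)(x-c_i)^\ell$ which realises $g$ as a (limit) key polynomial for $\Psi_1$; and this essentiality is equivalent to $(1,v(g'(\eta)))$ lying on that edge. Chaining the three steps yields $\Omega=(0)$ if and only if $1\in B$.

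The main obstacle will be the last step: making rigorous the passage between the combinatorial definition of $1\in B$ and the valuation equality. Two points need care. Since $\rho$ is only a cut (in both cases $\sup v(\eta-K)$ is not attained), every comparison involving $(m-1)\rho$ must be read as a comparison of cuts, and one must check that the identification of $1\in B$ with the edge condition is insensitive to this. Moreover, several terms in the expansion of $g'(\eta)$ may share the minimal value, so one must exclude cancellation among them when deducing that the lower bound is attained exactly when $1\in B$. I expect to control both by invoking the explicit description of $B$ from \cite{NN2023} together with the defect formula used in \eqref{purelebranchedequa}, which pin down which exponents survive in the minimal key polynomial.
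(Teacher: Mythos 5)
Your proof is correct and follows essentially the same route as the paper's: both use the isomorphism $\Omega\cong I_\alpha/I_\beta$ to reduce the statement to the coinitiality comparison between the ultimately decreasing families $\{-v(\eta-c_i)\}$ and $\{v(g'(\eta))-\nu_i(g)\}$, and then identify the resulting condition on the linear term of the $(x-c_i)$-expansion of $g$ with $1\in B$. Your Newton-polygon gloss and the explicit lower bound $v(g'(\eta))\ge \mathrm{ht}+(m-1)\rho$ are harmless refinements; the paper's own final step is the same (and equally terse) appeal to the characterization of $B$ from the cited work on minimal key polynomials.
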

\begin{proof}
Set $n=\deg(g)$. Take a cofinal well-ordered (with respect to $\nu$) sequence $\textbf{Q}_1=\{x-a_i\}_{i\in I}$ whose values are cofinal in $v(\eta-K)$ as before.

For each $i\in I$ we have
\[
\alpha_i=\nu\left(\frac{d }{dx}(x-a_i)\right)-\nu(x-a_i)=\nu(1)-\nu(x-a_i)=-\nu(x-a_i).
\]

Since $g$ is a polynomial of smallest degree which is $\textbf{Q}_1$-unstable, there exists $i_0\in I$ such that
\[
\nu(g')=\nu_i(g')=\nu(g'(a_i))\mbox{ for every }i\in I\mbox{ with }i\geq i_0. 
\]
Hence,
\[
\beta_i=\nu(g')-\nu_i(g)\mbox{ for every }i\in I\mbox{ with }i\geq i_0.
\]
In particular, $\{\beta_i\}_{i\in I}$ is ultimately decreasing.

Clearly, $\{\alpha_i\}_{i\in I}$ is decreasing. Since $\{\beta_i\}_{i\in I}$ is ultimately decreasing, there exists a final set
$I_{0}\geq i_0$ of $I$ such that $\{\alpha_i\}_{i\in I_0}$ and $\{\beta_i\}_{i\in I_0}$ are decreasing.

For each $i\in I$, the $(x-a_i)$-expansion of $g$ is given by
\[
g=g(a_i)+g'(a_i)(x-a_i)+\ldots+(x-a_i)^n.
\]
Then the condition $\alpha=\beta$ is equivalent to saying that for every $i\in I_0$, there exists $j\in I_0$ such that
\begin{equation}\label{equacbonia}
\nu(g'(a_i))-\nu_j(g)=\nu(g')-\nu_j(g)=\beta_j<\alpha_i=-\nu(x-a_i).
\end{equation}
This happens if and only if
\[
\nu\left(g'(a_i)(x-a_i)\right)<\nu_j(g).
\]
This shows that $\Omega=(0)$ if and only if $1\in B$.
\end{proof}

\subsection{The branched pure case}
The next result is the equivalent of Corollary \ref{corlopinertial} for branched pure extensions.
\begin{Prop}\label{casebranches}
Assume that $(L|K,v)$ is a branched pure extension. Then
\[
\Omega=(0)\mbox{ if and only if }d(L|K,v)=1.
\]
\end{Prop}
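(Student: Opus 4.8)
The plan is to reduce the statement to a property of the finite set $B$ and then feed in the explicit description of $B$ for branched pure extensions from \cite{NN2023}. By Proposition \ref{Propbunitinhamans}, for a branched pure extension one has $\Omega=(0)$ if and only if $1\in B$, so it suffices to establish $1\in B\Llr d(L|K,v)=1$.

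I would first dispose of the implication $d(L|K,v)=1\Lra\Omega=(0)$, which can be read off directly from the formulas for the two final segments. By \eqref{formulaalpha} we have $\alpha=-v(\eta-K)$, and by \eqref{purelebranchedequa} we have $\beta=v(g'(\eta))-\beta_d-d\cdot v(\eta-K)$, where $d=d(L|K,v)$ and $\beta_d$ is the fixed value of $\partial_dg$. If $d=1$, then $\partial_dg=\partial_1g=g'$, so $\beta_d$ is the fixed value of $g'$, which equals $v(g'(\eta))$ by Remark \ref{dividebyain}. Substituting into \eqref{purelebranchedequa} yields $\beta=-v(\eta-K)=\alpha$, hence $I_\alpha=I_\beta$ and $\Omega=(0)$.

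For the converse I would argue by contraposition, assuming $d>1$ and showing $\Omega\neq(0)$, equivalently $1\notin B$. The defect formula \cite[Theorem 6.14]{NN20221} gives $\nu_i(g)=\beta_d+d\cdot v(\eta-c_i)$ for all large $i$, so the degree-$d$ term of the $(x-c_i)$-expansion of $g$ realizes the truncated value $\nu_i(g)$, while the degree-$1$ term has the larger value $v(g'(\eta))+v(\eta-c_i)\geq\nu_i(g)$. The description of $B$ in \cite{NN2023} identifies, for a branched pure extension, the smallest positive exponent occurring in the minimal key polynomial for $\Psi_1$ as being exactly $d$; hence when $d>1$ the degree-$1$ monomial is not needed to assemble a key polynomial for $\Psi_1$, that is $1\notin B$, and Proposition \ref{Propbunitinhamans} gives $\Omega\neq(0)$.

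The hard part is this converse direction, and the reason it requires genuine input from \cite{NN2023} is the following. A term-by-term comparison of the generators $\alpha_i=-v(\eta-c_i)$ of $\alpha$ with the generators $\beta_i=v(g'(\eta))-\beta_d-d\cdot v(\eta-c_i)$ of $\beta$ yields only the inclusion $I_\beta\subseteq I_\alpha$; passing to the limit as $v(\eta-c_i)$ approaches $\sup v(\eta-K)$ collapses the gap $\beta_i-\alpha_i$ to exactly the value that must be shown to be nonzero, so it cannot by itself decide whether $\alpha=\beta$. Ruling out equality when $d>1$ is precisely where one needs the structural fact that the degree-$1$ monomial is genuinely absent from the minimal key polynomial once $d>1$, rather than a mere comparison of limiting values.
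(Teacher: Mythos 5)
Your proposal follows essentially the same route as the paper: both reduce the statement via Proposition \ref{Propbunitinhamans} to the equivalence $1\in B\Llr d(L|K,v)=1$ and then invoke the description of $B$ for branched pure extensions from \cite{NN2023} (the paper cites Theorems 6.22 and 6.26 there). Your extra direct verification of the implication $d(L|K,v)=1\Lra\Omega=(0)$ from \eqref{formulaalpha} and \eqref{purelebranchedequa} is correct but is only a minor variant of the paper's observation that $d\in B$ always holds.
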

\begin{proof}
In this case we are in the situation of Proposition \ref{Propbunitinhamans}, so it is enough to show that $1\in B$ if and only if $d(L|K,v)=1$. This follows from \cite[Thereom 6.22 and Theorem 6.26]{NN2023}.
\end{proof}

\subsection{A general result}
The main goal of this section is to prove the following.

\begin{Teo}\label{classificatputre}
Assume that $(L|K,v)$ is a pure defect, a branched pure or a purely inertial extension. Then $\Omega=(0)$ if and only if $p\nmid B$.
\end{Teo}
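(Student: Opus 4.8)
The plan is to reduce the general statement to the three cases already treated, using the characterizations of $B$ supplied by \cite{NN2023}. The logical structure is a three-way case split on the type of the pure extension, and in each case I would show that the condition $p\nmid B$ coincides with the condition under which $\Omega=(0)$ was already computed. Thus the theorem is essentially a bookkeeping result that unifies Corollary \ref{corlopinertial}, Proposition \ref{Propbunitinhamans} and Proposition \ref{casebranches} under the single clean criterion $p\nmid B$.

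First I would dispose of the purely inertial case: this is exactly Corollary \ref{corlopinertial}, which already asserts the equivalence $\Omega=(0)\Llr p\nmid B$ directly, so nothing remains to prove there. Next, for the pure defect case, Proposition \ref{Propbunitinhamans} gives $\Omega=(0)\Llr 1\in B$, so it suffices to show that, for a pure defect extension, $p\nmid B$ is equivalent to $1\in B$. Here I would invoke the results of \cite{NN2023} on minimal key polynomials for the plateau $\Psi_1$: as the introduction records, in the pure defect case $p\nmid B$ holds if and only if $1\in B$. Combining this equivalence with Proposition \ref{Propbunitinhamans} closes the defect case. Finally, for the branched pure case, Proposition \ref{Propbunitinhamans} again gives $\Omega=(0)\Llr 1\in B$, and Proposition \ref{casebranches} identifies this with $d(L|K,v)=1$; so I would again appeal to the explicit description of $B$ from \cite{NN2023} (Theorem 6.22 and Theorem 6.26 cited there) to see that $p\nmid B$ is equivalent to $1\in B$, hence to $d(L|K,v)=1$, for branched pure extensions.

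The key steps, in order, are therefore: (1) record that the purely inertial subcase is Corollary \ref{corlopinertial} verbatim; (2) in the pure defect subcase, quote from \cite{NN2023} the implication $p\nmid B\Llr 1\in B$ and combine with Proposition \ref{Propbunitinhamans}; (3) in the branched pure subcase, quote the corresponding description of $B$ from \cite{NN2023} to obtain $p\nmid B\Llr 1\in B$, then apply Proposition \ref{Propbunitinhamans} (equivalently Proposition \ref{casebranches}). The arithmetic input from \cite{NN2023} is what makes the $p\nmid B$ formulation uniform across all three cases, since a priori $p\nmid B$ looks weaker than $1\in B$.

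The main obstacle I anticipate is step (2)–(3): verifying that $p\nmid B$ is genuinely equivalent to $1\in B$ for pure defect and branched pure extensions, rather than merely implied by it. This is not a formality — it is the one place where a structural fact about the finite set $B\subseteq\N$ attached to the plateau $\Psi_1$ is needed, and it rests entirely on the classification of minimal key polynomials in \cite{NN2023}. Concretely, one must know that $B$ has the shape forced by the defect: in the defect setting $B$ is a set of powers of $p$ (so that $p\nmid B$ forces $1\in B$), whereas in general $B$ need not have this property. If that description from \cite{NN2023} were unavailable, the equivalence would fail and only the implication $1\in B\Lra p\nmid B$ would survive. Assuming the cited theorems, however, the proof is a clean case analysis with no new computation required.
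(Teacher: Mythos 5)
Your plan matches the paper's proof essentially verbatim: a three-way case split in which the purely inertial case is Corollary \ref{corlopinertial}, and the pure defect and branched pure cases combine Proposition \ref{Propbunitinhamans} ($\Omega=(0)\Llr 1\in B$) with the structural descriptions of $B$ from \cite{NN2023} (namely $B\subseteq\{1,p,p^2,\ldots\}$ in the defect case, and that $1\notin B$ forces every element of $B$ to be a multiple of $p$ in the branched case). You also correctly isolate the only substantive point --- that the equivalence $p\nmid B\Llr 1\in B$ is exactly what the cited results of \cite{NN2023} supply --- so nothing is missing.
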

\begin{proof}
If the extension is of pure defect, then it follows from Proposition \ref{Propbunitinhamans} that $\Omega=(0)$ if and only if $1\in B$. On the other hand, it follows from \cite[Theorem 5.10 and Corollary 5.11]{NN2023} that
\[
B\subseteq \{1,p,p^2,\ldots\}.
\]
This proves the result for pure defect extensions.

If the extension is purely inertial, then the result was proved in Corollary \ref{corlopinertial}.

If $(L|K,v)$ is a branched pure extension, then we can apply Proposition \ref{Propbunitinhamans} to obtain that $\Omega=(0)$ if and only if $1\in B$. Since $d\in B$, if $d=1$, then $\Omega=(0)$. On the other hand, it follows from \cite[Theorem 6.26]{NN2023} that if $1\notin B$, then every element of $B$ is a multiple of $p$. This concludes the proof of the theorem.  

\end{proof}

\subsection{The purely ramified case}\label{seclmpureramifi}
We discuss now one particular case where we cannot apply the above results. Assume that the extension $(L|K,v)$ is purely ramified. Choosing $\eta$ as in Proposition \ref{proposgenepure} we deduce that $\gamma=v\eta$ is the maximum of $v(\eta-K)$ and generates $vL$ over $vK$.

Assume, without loss of generality, that $\gamma>0$. Furthermore, let $\Delta$ denote the greatest (in the sense of inclusion) isolated subgroup of $vL$ such that $\Delta<\gamma$.
\begin{Exa}
To illustrate the definition of $\Delta$, we present an example where $\Delta\neq (0)$. Let $(L|K,v)$ be a valued field extension such that
\[
vL=\frac{1}{2}\Z\times_{\rm lex}\R\supset \Z\times_{\rm lex}\R=vK
\]
and $\gamma=\left(\frac 12,0\right)$. Then $\Delta=(0)\times_{\rm lex}\R$. Observe that in this case, $vL_{>0}$ does not have a minimal element, but $\left(\frac{vL}{\Delta}\right)_{>0}$ does.
\end{Exa}

If $\left(\frac{vL}\Delta\right)_{>0}$ has a minimal element, then we will choose $\eta$ in such a way that $\gamma+\Delta=\min\left(\frac{vL}\Delta\right)_{>0}$.

Set $\textbf{Q}_1=\{ x\}$. Let
\[
n=[K(\eta):K].
\]
Since the extension is purely ramified, there exists $b_0\in K$ such that
\[
v\left(\eta^n-b_0\right)>nv\eta=vb_0.
\]
In this case, $Q=x^n-b_0$ is a minimal key polynomial for $\textbf{Q}_1$. In particular, $B=\{n\}$.

\begin{Lema}\label{generationofOL}
Under the above assumptions we have
\begin{equation}
\VR_L=\VR_K\left[\left.\frac{\eta}h\ \right|\ h\in K\mbox{ and }vh<\gamma\right].\label{eq:VRcase1}
\end{equation}
\end{Lema}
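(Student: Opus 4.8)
The plan is to prove both inclusions in \eqref{eq:VRcase1}. The inclusion $\supseteq$ is immediate: for $h\in K$ with $vh<\gamma$ we have $v(\eta/h)=\gamma-vh>0$, so each generator lies in $\VR_L$, and hence so does the $\VR_K$-subalgebra $R:=\VR_K[\eta/h : h\in K,\ vh<\gamma]$ they generate. For $\subseteq$, take $b\in\VR_L$. Since $[L:K]=n$ and $\{1,\eta,\dots,\eta^{n-1}\}$ is a $K$-basis of $L$, write $b=\sum_{k=0}^{n-1}c_k\eta^k$ with $c_k\in K$. Because $vL/vK$ is cyclic of order $n$ generated by $\gamma=v\eta$, the values $v(c_k\eta^k)=vc_k+k\gamma$ lie in pairwise distinct cosets of $vK$, hence are pairwise distinct; thus $vb=\min_k(vc_k+k\gamma)\ge 0$ forces $vc_k+k\gamma\ge 0$ for every $k$. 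As $R$ is closed under addition, it suffices to put each monomial $c_k\eta^k$ into $R$, and for $1\le k\le n-1$ the inequality is strict ($k\gamma\notin vK$ while $vc_k\in vK$), i.e. $vc_k+k\gamma>0$. A monomial $c_k\eta^k$ lies in $R$ as soon as we find $h_1,\dots,h_k\in K^\times$ with $vh_j<\gamma$ and $vc_k+\sum_j vh_j\ge 0$, since then $c_k\eta^k=\bigl(c_k\prod_j h_j\bigr)\prod_j(\eta/h_j)$ has integral coefficient and generator factors.

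The core is thus a statement about the value group: given $w:=vc_k\in vK$ with $\theta:=w+k\gamma>0$, find $\delta_1,\dots,\delta_k\in vK$, each $<\gamma$, with $\sum_j\delta_j\ge -w$; equivalently a positive $\rho$ in the coset $\gamma+vK$ with $k\rho\le\theta$. This is where the subgroup $\Delta$ and the choice of $\eta$ enter. First I would note $\Delta\subseteq vK$: as $(vL:vK)=n$, the convex subgroup $\Delta\cap vK$ has finite index in $\Delta$, and a convex subgroup of finite index must be the whole group. Passing to $\overline{vL}:=vL/\Delta\supseteq\overline{vK}:=vK/\Delta$, the image $\bar\gamma:=\gamma+\Delta$ still generates a cyclic quotient of order $n$. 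Maximality of $\Delta$ shows that every nonzero convex subgroup of $\overline{vL}$ contains $\bar\gamma$, so the convex subgroup $\bar C_0$ generated by $\bar\gamma$ is the smallest nonzero one; being archimedean it embeds into $\R$. Writing $\bar w$ for the image of $w$, one checks that $\bar w>\bar C_0$ gives $w>0$, so $c_k\in\VR_K$ and we are done, while $\bar w<\bar C_0$ is excluded by $\theta>0$; hence we may assume $\bar w\in\bar C_0$, and then $\bar\theta>0$ lies in $\bar C_0$ as well.

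Now I would split according to the dichotomy built into the hypotheses. If $(vL/\Delta)_{>0}$ has a smallest element (so $\bar\gamma=\min(vL/\Delta)_{>0}$), then $\bar C_0\cong\Z\bar\gamma$ with $\overline{vK}\cap\bar C_0=n\Z\bar\gamma$; writing $\bar w=m\bar\gamma$ with $n\mid m$, the relation $-\bar w<k\bar\gamma$ and $k\le n-1$ force $m\ge 0$, i.e. $\bar w\ge 0$. Then either $c_k\in\VR_K$, or $w\in\Delta$ with $w<0$, and in the latter case I take $\delta_1=-w\in\Delta\subseteq vK$ (which is $<\gamma$ because $\Delta<\gamma$) and $\delta_2=\dots=\delta_k=0$. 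If instead $(vL/\Delta)_{>0}$ has no smallest element, then $\bar C_0$ is a dense subgroup of $\R$, and so is its finite-index subgroup $K_0:=\overline{vK}\cap\bar C_0$; by density I choose $\bar\rho$ in the coset $\bar\gamma+K_0$ with $0<k\bar\rho<\bar\theta$, lift $\bar\rho-\bar\gamma\in K_0$ to some $a\in vK$, and set $\rho:=\gamma+a\in\gamma+vK$. Since a positive image in $\overline{vL}$ forces positivity in $vL$, the inequality $k\bar\rho<\bar\theta$ lifts to $k\rho<\theta$; setting all $\delta_j=\gamma-\rho=-a<\gamma$ then gives $w+\sum_j\delta_j=\theta-k\rho>0$, completing the monomial and hence the lemma.

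I expect the main obstacle to be precisely this last valuation-theoretic step. The naive redistribution of a monomial fails exactly when $vK$ has a ``gap'' just below $\gamma$, so the whole point is to control the positive part of the coset $\gamma+vK$. The reduction modulo $\Delta$, the identification of $\bar C_0$ as the smallest convex subgroup, the discrete-versus-dense alternative, and the lifting of small positive elements from $\overline{vL}$ back to $vL$ are what make this tractable; the role of $\Delta$ and of the normalization $\gamma+\Delta=\min(vL/\Delta)_{>0}$ is exactly to separate these two cases cleanly.
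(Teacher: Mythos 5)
Your overall strategy coincides with the paper's: both proofs reduce the lemma to a statement about the value group, namely that for each monomial $c_k\eta^k$ of non-negative value one can find $h\in K$ with $vh<\gamma$ and $k\,vh\ge -vc_k$ (the paper uses one $h$ for all monomials, you allow one per monomial, which amounts to the same thing), and both settle this by splitting on whether $\left(\frac{vL}{\Delta}\right)_{>0}$ has a least element. Your reduction, the strictness of $vc_k+k\gamma>0$ for $1\le k\le n-1$, the identification of the convex subgroup generated by $\bar\gamma$ as the smallest nonzero convex subgroup of $vL/\Delta$, and the discrete/dense dichotomy are all correct and parallel the paper.

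The gap is your claim that $\Delta\subseteq vK$. The justification fails because $\Delta\cap vK$ need not be convex in $\Delta$ ($vK$ is not convex in $vL$, so intersecting with it does not preserve convexity), and the claim itself is false: take $vL=\Z\times_{\rm lex}\Z$, $vK=\{(a,b)\mid a\equiv b\ ({\rm mod}\ n)\}$ and $\gamma=(1,0)$; then $\Delta=(0)\times\Z\not\subseteq vK$, and $(vK+\Delta)/\Delta$ is all of $vL/\Delta$, so $\overline{vK}\cap\bar C_0=\Z\bar\gamma$ rather than $n\Z\bar\gamma$ and your deduction ``$n\mid m$, hence $\bar w\ge 0$'' collapses. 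Concretely, for $n=3$, $k=2$ and $w=(-2,1)\in vK$ one has $\theta=w+2\gamma=(0,1)>0$ but $\bar w=-2\bar\gamma<0$, and no admissible $h$ exists, since $2vh\ge(2,-1)$ forces $vh\ge(1,0)$ while we need $vh<(1,0)$; for the realization $g=x^3-t^{(3,0)}$ over a Hahn series field the monomial $c_2\eta^2=t^{(0,1)}$ then genuinely fails to lie in the right-hand side of \eqref{eq:VRcase1}. (Your dense case has the same dependence: $\bar\theta>0$ requires $\theta\notin\Delta$, which again needs $\Delta\subseteq vK$.) You have in fact isolated the exact point on which the paper's own argument also leans --- the assertions $-v(b_s)\in\Delta^+$ in case \textbf{(i)} and the density of $vK+\Delta$ in $vL/\Delta$ in case \textbf{(ii)} are precisely this compatibility between $\Delta$ and $vK$ in another guise, and it does hold in the paper's motivating example $vL=\frac12\Z\times_{\rm lex}\R\supset\Z\times_{\rm lex}\R$ --- but it is not a formality: without an additional hypothesis such as $\Delta\subseteq vK$, the step, and with it the statement as written, can fail.
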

\begin{proof}

It is obvious that the right hand side of \eqref{eq:VRcase1} is contained in the left hand side. To prove the opposite inclusion, fix an element $b\in\VR_L$ and write $b=f(\eta)\in K[\eta]$ with
$\deg(f)=m<n$:
\begin{equation}
b=b_0+b_1\eta+\ldots+b_m\eta^m\mbox{ where }b_s=\partial_sf(0)\in K.\label{eq:bisinKeta}
\end{equation}
The elements $s\gamma$, $0\leq s\leq m$, belong to different $vK$-cosets of $vL$. We deduce that
\begin{equation}
0\leq v(b)=\min\limits_{0\leq s\leq m}\{v(b_s)+s\gamma\}.\label{eq:0<vb=min}
\end{equation}

By \eqref{eq:0<vb=min}, we have
\begin{equation}
-v(b_s)<s\gamma\mbox{ for every }s, 1\leq  s\leq m.\label{eq:interval}
\end{equation}
We claim that there exists $h\in K$ such that for all integers $s$, $1\leq s\leq m$, we have 
\begin{equation}
-v(b_s)\leq svh<s\gamma.\label{eq:intervald}
\end{equation}
Denote by $\Delta^+$ the smallest initial segment of $vL$ containing $\Delta$.

If $\left(\frac{vL}\Delta\right)_{>0}$ contains a minimal element then, by the choice of $\gamma$, we have
\[
-v(b_s)\in\Delta^+
\]
for all $s$, so we may take $h$ to be any element of $K$ such that
\[
v(h)\in\Delta\mbox{ and }v(h)\ge\max\limits_{1\le s\le m}\left\{-\frac{v(b_s)}{s}\right\}.
\]

Assume that $\left(\frac{vL}\Delta\right)_{>0}$ does not contain a minimal element. Then every non-empty open interval in
$\frac{vL}\Delta$ contains an element of $vK+\Delta$. Hence, there exists $h\in K$ such that
\begin{equation}
\max_{1\le s\le m}\left\{-\frac{v(b_s)}{s}\right\}+\Delta<vh+\Delta<\gamma+\Delta\label{eq:interval}
\end{equation}
so, in particular,
\begin{equation}
\max_{1\le s\le m}\left\{-\frac{v(b_s)}{s}\right\}<vh<\gamma.\label{eq:intervalbis}
\end{equation}
This completes the proof of the existence of $h\in K$ satisfying \eqref{eq:intervald}. Then
\[
b=b_0+b_1 h\frac\eta h+\ldots+b_mh^m\left(\frac \eta h\right)^m\in\VR_K\left[\frac\eta h\right].
\]
This completes the proof of the lemma.
\end{proof}

\begin{Obs}\label{invariancegroup}
Consider the final segment $-\Delta^+$ of $vL$. Then the invariance subgroup of $-\Delta^+$ is exactly $\Delta$. 
\end{Obs}

Write
\[
g=a_0+a_1x+\ldots+x^n.
\]
\begin{Obs}
For any $\ell$, $1\leq \ell\leq n-1$, we have
\begin{equation}\label{equacoefing}
v\ell+va_\ell-(n-\ell)\gamma>0.
\end{equation}
Indeed, since $v(g(\eta))=\infty$ and $\gamma,2\gamma, (n-1)\gamma, n\gamma$ belong to distinct $vK$-cosets, we deduce that
\[
va_0=n\gamma<\min_{1\leq \ell\leq n-1} \{va_\ell+\ell\gamma\}.
\]
The inequality \eqref{equacoefing} follows.
\end{Obs}
\begin{Prop}\label{purelyramified}
Assume that the extension $(L|K,v)$ is purely ramified.
\begin{description}
\item[(i)] If $\left(\frac{vL}\Delta\right)_{>0}$ contains a minimal element, then
$\Omega\neq (0)$.
\item[(ii)] If $\left(\frac{vL}\Delta\right)_{>0}$ does not contain a minimal element, then $\Omega=(0)$ if and only if there exists
$\ell$, $1\leq \ell\leq n$, such that
\begin{equation}
v\ell+va_\ell-(n-\ell)\gamma\in\Delta.\label{eq:pnmidB1}
\end{equation}
\end{description}
\end{Prop}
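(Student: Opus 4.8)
The plan is to set up generators for $\VR_L$ over $\VR_K$ in the purely ramified case, compute the Kähler differential module directly from the presentation, and then reduce the vanishing of $\Omega$ to a statement about a final segment of $vL$ and its invariance subgroup. First I would invoke Lemma \ref{generationofOL}, which gives
\[
\VR_L=\VR_K\left[\left.\frac{\eta}{h}\ \right|\ h\in K,\ vh<\gamma\right].
\]
Setting $\textbf{Q}_1=\{x\}$ and following the strategy of the proof of Theorem \ref{Trheconjecaseoure}, I would introduce a variable $X_h$ for each admissible $h$, mapping $X_h\mapsto \eta/h$, and describe the kernel $\mathcal I$ of the surjection $\VR_K[X_h]\to\VR_L$. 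The $K$-relations among the $\eta/h$ are generated by $h'(\eta/h')=h(\eta/h)$, i.e.\ $h\, X_{h'}=h'\, X_h$, and the single algebraic relation $g(\eta)=0$ rewritten in terms of a fixed $X_h$. Differentiating via the second fundamental exact sequence (as in the cited (26.E) and (26.I) of \cite{Mat}) would yield $\Omega$ as the quotient of $\bigoplus_h \VR_L\, dX_h$ by the relations $h\, dX_{h'}=h'\, dX_h$ together with the image of $dg$.

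Next I would compute the analogue of the final segments $\alpha$ and $\beta$. As in \eqref{formulaalpha}, the derivative relations force $\alpha=-v(\eta-K)$, and since here the maximum of $v(\eta-K)$ is $\gamma$, the segment $\alpha$ is essentially $-\Delta^+$ up to the invariance subgroup; by Remark \ref{invariancegroup} its invariance subgroup is exactly $\Delta$. For $\beta$, I would expand $g'(\eta)=\sum_{\ell=1}^{n}\ell\, a_\ell\eta^{\ell-1}$ and use that the values $\ell\gamma$ lie in distinct $vK$-cosets to read off the relevant values of $g'(\eta)$ against the $(x)$-truncation $\nu_x(g)=n\gamma$; this produces the quantities $v\ell+va_\ell-(n-\ell)\gamma$ appearing in \eqref{eq:pnmidB1}, which by Remark \ref{equacoefing} are all strictly positive. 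Then $\Omega=(0)$ is equivalent to $I_\alpha=I_\beta$, i.e.\ to $\beta$ swallowing $\alpha$, which translates into whether the set $\{v\ell+va_\ell-(n-\ell)\gamma\}$ reaches down into $\Delta^+$.

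Finally I would split into the two cases. For part \textbf{(i)}, when $\left(\tfrac{vL}{\Delta}\right)_{>0}$ has a minimal element, $\gamma+\Delta$ is that minimum and the admissible $h$ are bounded strictly below $\gamma$, so the generators $\eta/h$ cannot exhaust a neighborhood of value $0$; concretely, $\alpha$ has no smallest element reachable by $\beta$, and one exhibits a nonzero class in $I_\alpha/I_\beta$ to conclude $\Omega\neq(0)$. For part \textbf{(ii)}, when no minimal element exists, the density argument of Lemma \ref{generationofOL} (equations \eqref{eq:interval} and \eqref{eq:intervalbis}) shows that $\Omega$ vanishes precisely when some value $v\ell+va_\ell-(n-\ell)\gamma$ already lies in $\Delta$, since $\Delta$ is the invariance subgroup of $-\Delta^+$ by Remark \ref{invariancegroup}; membership in $\Delta$ is exactly the condition that $\beta$ absorbs all of $\alpha$ modulo the invariance subgroup. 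The main obstacle I anticipate is the careful bookkeeping in matching the segments $\alpha$ and $\beta$ modulo $\Delta$: one must verify that the isolated subgroup $\Delta$ and its initial segment $\Delta^+$ interact correctly with the coset decomposition of $g'(\eta)$, and that the strict inequalities \eqref{equacoefing} do not accidentally force an element into $\Delta$ in the minimal-element case, which is what genuinely separates parts \textbf{(i)} and \textbf{(ii)}.
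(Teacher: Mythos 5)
Your overall route coincides with the paper's: generators from Lemma \ref{generationofOL}, a presentation of $\Omega$ with one generator per admissible $h$ and relations coming from the proportionalities among the $\eta/h$ together with $g(\eta)=0$, reduction of $\Omega=(0)$ to an equality of two $\VR_L$-submodules of $L$ given by final segments, and the invariance subgroup $\Delta$ of Remark \ref{invariancegroup} as the arbiter. Your case \textbf{(ii)} lands in the right place: the right-hand segment is $\min_{1\le\ell\le n}\{v\ell+va_\ell-(n-\ell)\gamma\}+vL_{>-\gamma}$, and equality with $\alpha=vL_{>-\gamma}$ holds precisely when that minimum lies in $\Delta=H\left(vL_{>-\gamma}\right)$, which by convexity of $\Delta$ is condition \eqref{eq:pnmidB1}.

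The genuine gap is in case \textbf{(i)}. Saying that the generators ``cannot exhaust a neighborhood of value $0$'' and that ``one exhibits a nonzero class'' is not an argument, and your closing diagnosis --- that what separates \textbf{(i)} from \textbf{(ii)} is whether the inequalities \eqref{equacoefing} push some $v\ell+va_\ell-(n-\ell)\gamma$ into $\Delta$ --- misidentifies the dichotomy. In case \textbf{(i)} those quantities play no role: there the condition $vh\le\gamma$ forces $vh\in\Delta^+$, so the left-hand module is exactly $I_{-\Delta^+}$, while the right-hand module is contained in $I_{v(g'(\eta))-\Delta^+}$; since $v(g'(\eta))\ge(n-1)\gamma\ge\gamma>\Delta=H(-\Delta^+)$, the shifted segment $v(g'(\eta))-\Delta^+$ is a proper subset of $-\Delta^+$ and the inclusion is strict, whatever the $a_\ell$ are. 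What changes in case \textbf{(ii)} is the shape of $\alpha$ itself: the density of $vK+\Delta$ below $\gamma+\Delta$ makes $\alpha=vL_{>-\gamma}$ and lets the term $n(\gamma-vh)$ absorb the $(n-1)\gamma$ hidden inside $v(g'(\eta))$, leaving exactly the quantities of \eqref{eq:pnmidB1}. Relatedly, importing $\alpha=-v(\eta-K)$ ``as in \eqref{formulaalpha}'' is out of place here (the paper stresses that key-polynomial sequences do not furnish generators in the purely ramified case, and the final segment generated by $-v(\eta-K)$ would wrongly contain $-\gamma$); the correct segment is generated by $\{-vh\mid h\in K,\ vh<\gamma\}$, which is $-\Delta^+$ in case \textbf{(i)} but $vL_{>-\gamma}$ in case \textbf{(ii)} --- two different objects that you conflate as ``essentially $-\Delta^+$.'' A small additional slip: the relation between $\eta/h$ and $\eta/h'$ reads $hX_h=h'X_{h'}$, not $hX_{h'}=h'X_h$.
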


\begin{proof}
For $h\in K$ with $vh\le\gamma$, let $J_h$ denote the $\VR_K$-submodule of $K$ defined by
\[
J_h=\{b\in K\ |\ bg(hx)\in\VR_K[x]\}.
\]
It follows from Lemma \ref{generationofOL} that
\begin{equation}\label{eq:Omegaingeneral}
\Omega=\frac{\bigcup\limits_{\substack{h\in K\\vh\le\gamma}}\frac1h\VR_Ldx}{\left(bg'(\eta)dx\ \left|\ b\in\bigcup\limits_{\substack{h\in K\\vh\le\gamma}}J_h\right.\right)}.
\end{equation}
\begin{Obs} We have $J_h=\frac1{h^n}\VR_K$, so \eqref{eq:Omegaingeneral} becomes
\begin{equation}\label{eq:Omegaingeneral1}
\Omega=\frac{\bigcup\limits_{\substack{h\in K\\vh\le\gamma}}\frac1h\VR_Ldx}{\left(bg'(\eta)dx\ \left|\ b\in\bigcup\limits_{\substack{h\in K\\vh\le\gamma}}\frac1{h^n}\VR_K\right.\right)}.
\end{equation}
\end{Obs}
The equality \eqref{eq:Omegaingeneral1} says that $\Omega=(0)$ if and only if the following equality of $\VR_L$-submodules in $L$ holds:
\begin{equation}\label{eq:eqfractideals}
\bigcup\limits_{\substack{h\in K\\vh\le\gamma}}\frac1h\VR_L=\left(bg'(\eta)\ \left|\ b\in\bigcup\limits_{\substack{h\in K\\vh\le\gamma}}\frac1{h^n}\VR_K\right.\right)\VR_L.
\end{equation}
Furthermore, we have

\begin{equation}
v\left(g'(\eta)\right)=\min\limits_{1\leq \ell\leq n}\{v\ell+va_\ell+(\ell-1)\gamma\}.\label{vbg'eta}
\end{equation}
In the rest of the proof of the proposition, we will consider cases \textbf{(i)} and \textbf{(ii)} separately.

Suppose that the hypothesis of \textbf{(i)} is satisfied. In this case the condition $vh\le\gamma$ is equivalent to saying that $vh\in \Delta^+$. Hence, the left hand side of \eqref{eq:eqfractideals} is equal to $I_{-\Delta^+}$. On the other hand, for each $h\in K$ with $vh\le\gamma$ we have $v(J_h)\subset -\Delta^+$. In particular, the right hand side of \eqref{eq:eqfractideals} is contained in
\[
I_{v(g'(\eta))-\Delta^+}.
\]
Since $v(g'(\eta))>\Delta$, by Remark \ref{invariancegroup}, we deduce that
\[
v(g'(\eta))-\Delta^+\subsetneqq -\Delta^+.
\]
Hence, the right hand side of \eqref{eq:eqfractideals} is strictly contained in its left hand side, so $\Omega\ne (0)$, as desired.

Suppose now that the hypothesis of \textbf{(ii)} is satisfied. Taking into account the equality \eqref{vbg'eta}, we see that the $\VR_L$-submodule on the left hand side of \eqref{eq:eqfractideals} is $I_\alpha$, where
\[
\alpha=\{-vh\ |\ h\in K,vh<\gamma\}=vL_{>-\gamma}
\]
and the $\VR_L$-submodule on the right hand side is $I_\beta$, where
\[
\beta=\left\{\left.\min\limits_{1\leq \ell \leq n}\{v\ell+va_\ell+(\ell-1)\gamma\}-nvh\ \right|\ h\in K,vh<\gamma\right\}.
\]
Rewrite $\beta$ as
\begin{displaymath}
\begin{array}{rcl}
\beta&=&\left\{\left.\min\limits_{1\leq \ell\leq n}\{v\ell+va_\ell-(n-\ell)\gamma\}+n(\gamma-vh)-\gamma\ \right|\ h\in K,vh<\gamma\right\}\\[8pt]
&=&\min\limits_{1\leq \ell \leq n}\{v\ell+va_\ell-(n-\ell)\gamma\}+vL_{>-\gamma},
\end{array}
\end{displaymath}
where the last equality holds because $\left(\frac{vL}\Delta\right)_{>0}$ does not contain a minimal element. A similar reasoning as Remark \ref{invariancegroup} shows that the invariance subgroup of $vL_{>-\gamma}$ is $\Delta$. It follows that
\[
\alpha=\beta\Llr \min\limits_{1\leq \ell \leq n}\{v\ell+va_\ell-(n-\ell)\gamma\}\in \Delta.
\]
The latter condition is equivalent to saying that $v\ell+va_\ell-(n-\ell)\gamma\in\Delta$ for some $\ell$, $1\leq \ell\leq n$. This completes the proof of the proposition.
\end{proof}
\begin{Cor}
Assume that $\left(\frac{vL}{\Delta}\right)_{>0}$ does not have a minimum. If $p\nmid B$, then $\Omega=(0)$.
\end{Cor}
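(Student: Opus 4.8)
The plan is to reduce the statement to a direct application of Proposition \ref{purelyramified}(ii). First I would recall that in the purely ramified setting under consideration we have $B=\{n\}$, so that the hypothesis $p\nmid B$ is simply the condition $p\nmid n$, where $p={\rm char}(Kv)$. Since we are assuming that $\left(\frac{vL}{\Delta}\right)_{>0}$ has no minimum, part \textbf{(ii)} of that proposition tells us that $\Omega=(0)$ precisely when there exists an index $\ell$, $1\le\ell\le n$, with $v\ell+va_\ell-(n-\ell)\gamma\in\Delta$. Thus it suffices to exhibit one such $\ell$.

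The key observation is that the choice $\ell=n$ already works. Indeed, since $g=a_0+a_1x+\cdots+x^n$ is monic we have $a_n=1$, whence $va_n=0$; and $(n-\ell)\gamma=(n-n)\gamma=0$. The expression to be tested therefore collapses to $v(n)$, the value of the integer $n$ regarded as an element of $K$. Because $p\nmid n$ and $p={\rm char}(Kv)$, the residue of $n\cdot 1_K$ in $Kv$ is nonzero, so $v(n)=0$. As $0\in\Delta$, the required membership $v\ell+va_\ell-(n-\ell)\gamma=v(n)=0\in\Delta$ holds for $\ell=n$.

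With this witness in hand, Proposition \ref{purelyramified}(ii) immediately yields $\Omega=(0)$, completing the argument. I expect no serious obstacle here: the only point needing care is the computation $v(n)=0$, which rests on the standing hypothesis that $p={\rm char}(Kv)>0$ together with $p\nmid n$, guaranteeing that $n\cdot 1_K$ has nonzero residue and hence trivial value. It is worth noting that the converse direction is \emph{not} being claimed, and indeed need not hold: the vanishing of $\Omega$ can also be forced by some other index $\ell<n$ satisfying the membership condition even when $p\mid n$, so the corollary records only the sufficiency that follows cleanly from the monic case $\ell=n$.
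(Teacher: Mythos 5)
Your argument is correct and is essentially the paper's own proof: both reduce to Proposition \ref{purelyramified}\textbf{(ii)} and verify condition \eqref{eq:pnmidB1} with the witness $\ell=n$, using $B=\{n\}$, the monicity of $g$ (so $va_n=0$), and the fact that $p\nmid n$ forces $vn=0\in\Delta$. Your write-up just spells out the computation $vn=0$ in slightly more detail than the paper does.
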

\begin{proof}
Since $B=\{n\}$, the condition $p\nmid B$ is the same as $p\nmid n$. Hence, $vn=0$. In particular, the condition \eqref{eq:pnmidB1} is satisfied for $\ell=n$. Consequently, $\Omega=(0)$.
\end{proof}
\begin{Cor}
Assume that $\left(\frac{vL}{\Delta}\right)_{>0}$ does not have a minimum.
If $vp\in \Delta$, then $\Omega=(0)$.
\end{Cor}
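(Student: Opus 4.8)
The plan is to reduce everything to Proposition \ref{purelyramified}(ii), which is available because we are in the case where $\left(\frac{vL}{\Delta}\right)_{>0}$ has no minimum. That proposition asserts that $\Omega=(0)$ if and only if there is an index $\ell$ with $1\le\ell\le n$ for which $v\ell+va_\ell-(n-\ell)\gamma\in\Delta$. Hence it suffices to exhibit a single such $\ell$, and I would look for it exactly as in the proof of the preceding corollary, by trying $\ell=n$.

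For the choice $\ell=n$ the term $(n-\ell)\gamma$ vanishes, and since $g$ is monic its leading coefficient is $1$, so $va_n=0$. Thus condition \eqref{eq:pnmidB1} for $\ell=n$ collapses to the single requirement $vn\in\Delta$. The remaining step is therefore to compute $vn$ and to check that the hypothesis $vp\in\Delta$ forces it into $\Delta$.

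To do this I would factor $n=p^k m$ with $p\nmid m$, where $p={\rm char}(Kv)$. The prime-to-$p$ factor $m$ is a unit of $\VR_K$, since its residue in $Kv$ is nonzero, so $vm=0$ and therefore $vn=k\cdot vp$. As $\Delta$ is a subgroup of $vL$ and $vp\in\Delta$ by assumption, we obtain $vn=k\cdot vp\in\Delta$, which is precisely what is needed to conclude $\Omega=(0)$.

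I do not expect a genuine obstacle here: once Proposition \ref{purelyramified}(ii) is granted, the argument is a one-line reduction to the arithmetic fact that integers prime to $p={\rm char}(Kv)$ have zero value. The only subtlety worth flagging is that this identification makes the hypothesis $vp\in\Delta$ meaningful only in mixed characteristic; in equal characteristic $p$ one has $vp=\infty\notin\Delta$, so the statement is vacuously true and nothing needs to be checked.
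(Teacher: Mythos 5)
Your proposal is correct and follows the paper's own argument: the paper likewise invokes Proposition \ref{purelyramified}\textbf{(ii)} with $\ell=n$, observing that $vp\in\Delta$ forces $vr\in\Delta$ for every $r\in\N$ (your factorization $n=p^k m$ with $vm=0$ is exactly the justification of that step). Your closing remark about equal characteristic $p$ being vacuous is also consistent with the paper's usage ($vp=\infty\notin\Delta$).
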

\begin{proof}
If $vp\in \Delta$, then $vr\in \Delta$ for every $r\in\N$. Hence, condition \eqref{eq:pnmidB1} is satisfied for $\ell=n$ and consequently, $\Omega=(0)$.
\end{proof}

\section{Artin--Schreier and Kummer extensions}
We apply the results of this paper to the particular cases of Kummer and Artin-Schreier extensions.
\subsection{Artin--Schreier extensions}
Assume that $(L|K,v)$ is an Artin--Schreier extension of degree $p={\rm char}(K)>0$. This means that $L=K(\eta)$ where $\eta$ is a root of an irreducible polynomial over $K$ of the form $g=x^p-x-a$. There are four cases to be considered.

If $d(L|K,v)>1$, then $(L|K,v)$ is a pure defect extension. Applying the results of Section \ref{sectionimportdeffe}, we obtain
\begin{equation}\label{equationalpha}
\alpha=-v(\eta-K)\mbox{ and }\beta=-p\cdot v(\eta-K).
\end{equation}
Hence, ${\rm ann}(\Omega)=I_\gamma\cap \VR_L$, where $\gamma$ is the final segment of $\Gamma$ formed by all the elements
$\beta'$ in $\Gamma$ for which there exists $i\in I$ such that
\[
\beta'+pv(\eta-a_i)> v(\eta-a_j)\mbox{ for every }j\in I.
\]
In the rank one case, we set $\rho=\sup v(\eta-K)$. Then
\[
{\rm ann}(\Omega)=\{b\in \VR_L\mid vb\geq (1-p)\rho\}.
\]
It follows from Proposition \ref{finitelygenerated} that if $\alpha\neq \beta$, then $\Omega$ is not finitely generated (and, consequently, not finitely presented).

If the extension is defectless and not unibranched, then it is branched pure and $d(L|K,v)=1$. By Proposition \ref{casebranches} we deduce that $\Omega=(0)$.

For the rest of this section, assume that $(L|K,v)$ is unibranched and defectless. Then, by the fundamental inequality,
\begin{equation}\label{fundamentinequality}
[L:K]=(vL:vK)[Lv:Kv].
\end{equation}
Since $[L:K]$ is prime, if $Lv\neq Kv$, then $(L|K,v)$ is purely inertial. We can apply Theorem \ref{classificatputre} to obtain that
\[
\Omega=(0)\Llr p\nmid B\Llr Lv|Kv\mbox{ is separable}.
\]
Alternatively, we can present the computation directly from Theorem \ref{Trheconjecaseoure}. Take $\tilde{c}\in K$ with $v\eta=-v(\tilde c)$. In this particular case, by Proposition \ref{Corfinitelgenalg} we have $\VR_L=\VR_K[\tilde c \eta]$. We have
\[
\alpha=v\tilde{c}\mbox{ and }\beta=pv\tilde{c}.
\]
In particular, we deduce that
\[
{\rm ann}(\Omega)=I_{\beta-\alpha}=\{a\in \VR_L\mid va\geq (p-1)v\tilde c\}.
\]
If $v\eta<0$ (so $v\tilde c>0$), then the residue field extension is purely inseparable and we have $\Omega\neq(0)$. If $v\eta=0$, then the residue field extension is separable and $\Omega=(0)$ (in particular, ${\rm ann}(\Omega)=\VR_L$). It follows from Corollary \ref{finitelypresentes} that $\Omega$ is finitely presented.
\begin{Obs}
If the residue field extension is purely inseparable ($v\eta<0$) we consider the ``translation map"
\[
\VR_L\lra I_\alpha/I_\beta, \ a\mapsto a\tilde c+I_\beta. 
\]
Clearly this map is surjective and its kernel is $\tilde c^{p-1}\VR_L$. Hence we deduce \cite[Theorem 4.4]{CK}.
\end{Obs} 

If $vK\neq vL$, then by \eqref{fundamentinequality} $[L:K]=(vL:vK)=p$. In  particular, $(L|K,v)$ is purely ramified. Since $p=(vL:vK)$ we deduce from Theorem \ref{purelyramified} that $\Omega\neq (0)$. Indeed, if $\left(\frac{vL}{\Delta}\right)_{>0}$ has a minimum, then the result follows from Theorem \ref{purelyramified} \textbf{(i)}. Otherwise, observe that
\[
v(a_1)+(n-1)\gamma\geq \gamma>\Delta \mbox{ and }vp=\infty>\Delta
\]
hence the result follows from Theorem \ref{purelyramified} \textbf{(ii)}.
\subsection{Kummer extensions}
Assume that $(L|K,v)$ is a Kummer extension of degree $q$ and let $p={\rm char}(Kv)>0$. We can write $L=K(\eta)$ where $g=x^q-a$ is the minimal polynomial of $\eta$ over $K$.

If $(L|K,v)$ is immediate and unibranched, then it is of pure defect. In particular, $p=q=d(L|K,v)$. We can assume that $v\eta=0$. By the results of Section \ref{sectionimportdeffe} we obtain
\[
\alpha=-v(\eta-K)\mbox{ and }\beta=vp-p\cdot v(\eta-K).
\]
Analogously to the Artin--Schreier case, we deduce that ${\rm ann}(\Omega)=I_\gamma\cap \VR_L$ where $\gamma$ is the final segment formed by all the elements $\beta'$ in $\Gamma$ for which there exists $i\in I$ such that
\[
\beta'-vp+pv(\eta-a_i)>v(\eta-a_j)\mbox{ for every }j\in I.
\]
In rank one, this can be written as
\[
{\rm ann}(\Omega)=\{b\in \VR_L\mid vb\geq vp+(1-p)\rho\}\mbox{ where }\rho=\sup v(\eta-K).
\]
We can see that if $\alpha\neq \beta$, then it follows from Proposition \ref{finitelygenerated} that $\Omega$ is not finitely generated (and hence not finitely presented).

If the extension is branched pure, then  by Proposition \ref{casebranches} we have $\Omega=(0)$ if and only if $d:=d(L|K,v)=1$. Let $\beta_d$ be the fixed value of $\partial_dg$. By \cite{NN2023}, if
\[
vp<r(\beta_d+d\cdot v(\eta-c))\mbox{ for some }r\in \Z \mbox{ and }c\in K,
\]
then $\Omega=(0)$.

For the remainder of this section, assume that $(L|K,v)$ is unibranched and defectless. As in the Artin-Schreier case, if $Lv\neq Kv$, then $(L|K,v)$ is purely inertial. By Theorem \ref{classificatputre} we obtain that
\[
\Omega=(0)\Llr p\nmid B\Llr Lv|Kv\mbox{ is separable}.
\]
More explicitly, taking $\tilde c\in K$ such that $v\eta=-v\tilde c$, we have
\[
\alpha=-v\eta=v\tilde c\mbox{ and }\beta=v(q\eta^{q-1})-qv(\eta)=vq+(q-2)v\tilde c.
\]
Now $\alpha=\beta$ if and only if $v\eta=-v\tilde c=\frac{vq}{q-1}$. If $Lv|Kv$ is not separable, then we have
\[
{\rm ann}(\Omega)=I_{\beta-\alpha}=\{b\in \VR_L\mid vb\geq vq-(q-1)v(\eta)\}.
\]
It follows from Proposition \ref{finitelypresentes} that $\Omega$ is finitely presented.
 
As in the Artin-Schreier case, if $vK\neq vL$, 	then $(L|K,v)$ is purely ramified. As before, if $\left(\frac{vL}{\Delta}\right)_{>0}$ has a minimum, then $\Omega\neq (0)$. Assume that $\left(\frac{vL}{\Delta}\right)_{>0}$ does not have a minimum. If $p\neq q$, then $vq=0$ and by Theorem \ref{purelyramified} \textbf{(ii)} we deduce that $\Omega=(0)$. On the other hand, if $p= q$, then  Theorem \ref{purelyramified} \textbf{(ii)} implies that
\[
\Omega=(0)\Llr vp\in \Delta.
\]
\begin{Obs}
An alternative characterization for $\Omega=(0)$ for a purely ramified Kummer extension is presented in \cite[Theorem 4.7]{CK}. 
\end{Obs}

\end{document}